\title{Log-Concavity in Powers of Infinite Series Close to $(1 - z)^{-1}$}
\date{}
\author{Shengtong Zhang}
\address{Department of Mathematics, Massachusetts Institute of Technology, Cambridge, MA 02139}
\email{stzh1555@mit.edu}
\thanks{All data generated or analysed during this study are included in this published article.}
\begin{document}
\begin{abstract}
In this paper, we use the analytic method of Odlyzko and Richmond to study the log-concavity of power series. If $f(z) = \sum_n a_nz^n$ is an infinite series with $a_n \geq 1$ and $a_0 + \cdots + a_n = O(n + 1)$ for all $n$, we prove that a super-polynomially long initial segment of $f^k(z)$ is log-concave. Furthermore, if there exists constants $C > 1$ and $\alpha < 1$ such that $a_0 + \cdots + a_n = C(n + 1) - R_n$ where $0 \leq R_n \leq O((n + 1)^{\alpha})$, we show that an exponentially long initial segment of $f^k(z)$ is log-concave. This resolves a conjecture proposed by Letong Hong and the author, which implies another conjecture of Heim and Neuhauser that the Nekrasov-Okounkov polynomials $Q_n(z)$ are unimodal for sufficiently large $n$.
\end{abstract}
\maketitle
\section{Introduction}
A polynomial or infinite series $p(z)$ is said to be unimodal / log-concave if the sequence of its coefficients is unimodal / log-concave. More explicitly, let $a_k$ denote the coefficient of $z^k$ in $p(z)$. We say $p(z)$ is \textbf{unimodal} if there exists an index $m$ such that $a_0 \leq a_1 \leq \cdots \leq a_m$ and $a_m \geq a_{m + 1} \geq \cdots$. We say $p(z)$ is \textbf{log-concave} if $a_k \geq 0$ for all $k\geq 0$, and $a_k^2 \geq a_{k - 1}a_{k + 1}$ for all $k \geq 1$. 

Many methods have been developed to show the unimodality and log-concavity of various combinatorial polynomials.  We refer to the excellent surveys of Brenti \cite{Brenti} and Stanley \cite{Stanley, Stanley2} for a collection of these methods, which encompass algebra, combinatorics, and geometry. For example, breakthrough works by Adiprasito, Braden, Huh, Katz, Matherne, Proudfoot, Wang, and many others \cite{Huh, Huh2, Huh3} use methods in algebraic geometry to settle the Mason and Heron–Rota–Welsh
conjecture on the log-concavity of the chromatic polynomial of graphs and the characteristic polynomial of matroids. 

This paper settles the unimodality of the Nekrasov-Okounkov polynomials, an important family of polynomials in combinatorics and representation theory. The Nekrasov-Okounkov polynomials are defined by
$$Q_n(z) := \sum_{\abs{\lambda} = n} \prod_{h \in \cH(\lambda)} \left(1 + \frac{z}{h^2}\right)$$
where $\lambda$ runs over all Young tableaux of size $n$, and $\cH(\lambda)$ denotes the multiset of hook lengths associated with $\lambda$. They are central in the groundbreaking Nekrasov-Okounkov identity \cite{NO}
$$\sum_{n = 0}^{\infty} Q_n(z)q^n = \prod_{m = 1}^{\infty} (1 - q^m)^{-z-1}.$$
In \cite{HN}, Heim and Neuhauser posed the conjecture that $Q_n(z)$ are unimodal polynomials. In \cite{HZ}, the author and Letong Hong showed that for sufficiently large $n$, the unimodality of $Q_n(z)$ is implied by the following conjecture. 
\begin{conjecture}
\label{conj:main}
Let $f(x)$ be the infinity series defined by
\begin{equation}
\label{eq:f-def}
f(z) := \sum_{n \geq 1} \sigma_{-1}(n)z^n
\end{equation}
where $\sigma_{-1}(n) = \sum_{d | n} d^{-1}$. Let $a_{n,k}$ be the coefficient of $z^n$ in $f^k(z)$. There exists a constant $C > 1$ such that for all $k\geq 2$ and $n \leq C^k$, we have
$$a_{n,k}^2 \geq a_{n - 1, k}a_{n + 1, k}.$$
\end{conjecture}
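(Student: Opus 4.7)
The plan is to deduce Conjecture~\ref{conj:main} from the second main theorem announced in the abstract, which asserts that under the stronger asymptotic $a_0 + \cdots + a_n = C(n+1) + O((n+1)^{\alpha})$ with $C > 1$ and $\alpha < 1$, an exponentially long initial segment of $f^k(z)$ is log-concave.

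First, I would normalize $f$ so that it has a nonzero constant term. Since $f(z) = \sum_{n \geq 1}\sigma_{-1}(n)z^n$ begins at $n = 1$, factor $f(z) = z\,\tilde{f}(z)$ where $\tilde{f}(z) = \sum_{n \geq 0}\sigma_{-1}(n+1)z^n$. Then $f^k(z) = z^k \tilde{f}^k(z)$, so the coefficient of $z^n$ in $f^k$ equals the coefficient of $z^{n-k}$ in $\tilde{f}^k$. The log-concavity inequality $c_{n,k}^2 \geq c_{n-1,k}c_{n+1,k}$ for $n \geq k+1$ translates term-by-term into log-concavity for $\tilde{f}^k$, while for $n = k$ it holds trivially because $c_{k-1,k} = 0$.

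Next, I would verify the hypothesis of the theorem for $\tilde{f}$. The bound $\sigma_{-1}(n+1) \geq 1$ is immediate because $m \mid m$ contributes $1$ to $\sigma_{-1}(m)$. For the partial sums, a Dirichlet swap gives
\[
\sum_{m=1}^{M}\sigma_{-1}(m) = \sum_{d=1}^{M}\frac{1}{d}\left\lfloor\frac{M}{d}\right\rfloor = M\sum_{d=1}^{M}\frac{1}{d^2} + O(\log M) = \zeta(2)\,M + O(\log M),
\]
using $\sum_{d > M}d^{-2} = O(1/M)$ for the tail. Hence $\tilde{f}$ satisfies the stronger hypothesis with $C = \zeta(2) = \pi^2/6 > 1$ and any $\alpha \in (0,1)$, so the main theorem yields a constant $C_0 > 1$ such that the first $C_0^k$ coefficients of $\tilde{f}^k$ form a log-concave sequence for all sufficiently large $k$. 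Undoing the factor of $z^k$ and handling the finitely many small $k$ separately then gives Conjecture~\ref{conj:main} with a possibly smaller constant in place of $C_0$.

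The real work, and the main obstacle, lies not in this reduction but in establishing the main theorem itself via the Odlyzko--Richmond method. The expected strategy is to estimate the Cauchy integral $(2\pi i)^{-1}\oint f^k(z) z^{-n-1}\,dz$ at a positive real saddle point $r = r(n,k)$; the condition $C > 1$ should keep $r$ bounded away from the radius of convergence of $f$ uniformly for $n$ up to $C_0^k$, while the error term $O(n^{\alpha})$ with $\alpha < 1$ should be what upgrades a super-polynomial log-concave window to an exponential one. The technical challenge I would anticipate is controlling the contour away from the saddle and propagating the resulting asymptotics through the discrete second-difference inequality that encodes log-concavity, which is where most of the paper's effort should go.
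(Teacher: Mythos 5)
Your reduction of \cref{conj:main} to \cref{thm:main2} matches the paper's proof of \cref{cor:conj-main} almost line for line: factor out $z^k$ (the paper writes $g(z) = f(z)/z = \sum_{n\ge 0}\sigma_{-1}(n+1)z^n$), check $\sigma_{-1}(m) \geq 1$, compute $\sum_{m\le M}\sigma_{-1}(m) = \frac{\pi^2}{6}M + O(\log M)$ by swapping the order of summation, and apply \cref{thm:main2} with $C = \pi^2/6$ and arbitrarily small $\alpha > 0$. Your treatment of the degree shift and the finitely many small $k$ (shrinking the final base so that $C^k < k$ there, at which point the relevant coefficients of $f^k$ vanish and the inequality is vacuous) is also correct, and is if anything slightly more careful than the paper, whose \cref{cor:conj-main} is stated only for sufficiently large $k$.

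The gap is in how you expect \cref{thm:main2} itself to be proved. You anticipate that the Odlyzko--Richmond saddle-point integral, aided by the sharper $O(n^{\alpha})$ hypothesis, will directly upgrade a super-polynomial log-concave window to an exponential one. That is not the mechanism. The contour-integral method yields only \cref{thm:main}, with window $n \leq A^{k^{1/3}}$: the minor-arc savings in \cref{lem:f-upper-bound} give a factor of the form $e^{-c(kr_0)^{1/3}}$, and this does not improve under the stronger hypothesis. The exponential range in \cref{thm:main2} is obtained by an entirely separate combinatorial argument. The paper writes each coefficient as $a_m = 1 + (a_m - 1)$, expands $a_{n,k} = \sum_{I\in\{0,1\}^k} a^I_n$ over indicator tuples $I$, and proves explicit binomial-coefficient asymptotics for $a^I_n$ and its finite differences (\cref{lem:one-zero}, \cref{lem:at-least-one-zero}, \cref{lem:2nd-order-aux}, \cref{lem:2nd-order}). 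These show that the zeroth, first, and second discrete differences of $n \mapsto a_{n,k}$ are each equal to $C^k(n+k)^{k-j}/(k-j)!$ up to a multiplicative error of size $O\bigl(k^2/(n+k)^{1-\alpha} + (n+k)^{2+\alpha}A^{-(2+\alpha)k}\bigr)$, so that the ratio $a_{n-1,k}(a_{n+1,k}-2a_{n,k}+a_{n-1,k})/(a_{n,k}-a_{n-1,k})^2$ is pinned at $\frac{k-2}{k-1}(1+o(1)) < 1$ on the range $k^{5/(1-\alpha)} \leq n \leq A^k/k^2$; the two windows are then glued. Without this combinatorial decomposition, which is the paper's essential new contribution beyond Odlyzko--Richmond, the exponential range, and hence \cref{conj:main}, would remain out of reach.
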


In other words, an exponentially long initial segment of $f^k(z)$ is log-concave as $k$ goes to infinity. See \cite{HN2} for more comments on this conjecture.

This conjecture is closely related to a theorem of Odlyzko and Richmond in 1985. In \cite{OR}, they showed the following remarkable result: if $p(z)$ is a degree $d$ polynomial with non-negative coefficients such that the coefficients $a_0, a_1, a_{d - 1}, a_d$ are all strictly positive, then there exists an $n_0$ such that $p^n(z)$ is log-concave for all $n > n_0$. However, Odlyzko and Richmond pointed out that this result does not trivially generalize to infinite series. 

In this paper, we prove \cref{conj:main}. Our main theorem generalizes \cref{conj:main} to all infinite series $f(z)$ that ``behave like $(1 - z)^{-1}$".

We call an infinite series $f(z)$ \textbf{$1$-lower bounded} if for any $n \geq 0$, the coefficient of $z^n$ in $f(z)$ is at least $1$.
\begin{theorem}
\label{thm:main}
Let 
$$f(z) = \sum_{n = 0}^{\infty} a_n z^n$$
be a $1$-lower bounded infinite series. Suppose there exist constants $c,C > 0$ such that for any $r \in (0,1)$ and $i \in \{0,1,2,3\}$, we have
$$\frac{c}{(1 - r)^{i + 1}} \leq f^{(i)}(r) \leq \frac{C}{(1 - r)^{i + 1}}.$$
Let $a_{n,k}$ denote the coefficient of $z^n$ in the infinite series $f^k(z)$.
Then there exists a constant $A > 1$ depending on $c$ and $C$ only such that 
$$a_{n,k}^2 \geq a_{n - 1, k}a_{n + 1, k}$$ 
for all $n \leq A^{k^{1/3}}$.
\end{theorem}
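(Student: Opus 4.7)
The approach is a careful saddle-point analysis of the Cauchy integral
$$a_{n,k} = \frac{1}{2\pi r^n}\int_{-\pi}^{\pi} f(re^{i\theta})^k e^{-in\theta}\,d\theta,$$
with contour radius $r = r_{n,k} \in (0,1)$ chosen to be the saddle point defined by $r f'(r)/f(r) = n/k$. Using the derivative bounds, one checks that $1-r \asymp k/(n+k)$ and that the natural Gaussian width $\sigma^2 := k(r h'(r) + r^2 h''(r))$, where $h = \log f$, satisfies $\sigma^2 \asymp (n+k)^2/k$. A Legendre-transform computation then shows that the second difference in $n$ of the ``main part'' $k \log f(r_n) - n\log r_n$ equals $-1/\sigma^2 \asymp -k/(n+k)^2$; this strictly negative ``leading concavity'' is what we aim to preserve through all error terms.

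Next, split the integral at $\theta_0 := (1-r)/k^{1/3}$. On the near region $|\theta| \leq \theta_0$, Taylor-expand the exponent $k \log f(re^{i\theta}) - in\theta$ to second order in $\theta$; the cubic remainder at the boundary is bounded by $|\psi^{(3)}(0)|\theta_0^3 \lesssim k\theta_0^3/(1-r)^3 \lesssim 1$ by the choice of $\theta_0$, which legitimates the Gaussian approximation (the Gaussian weight $e^{-\sigma^2\theta_0^2/2} \approx e^{-k^{1/3}/2}$ additionally suppresses contributions near the boundary). On the far region $|\theta| > \theta_0$, use the $1$-lower-boundedness through
$$f(r) - \operatorname{Re} f(re^{i\theta}) = \sum_{m \geq 0} a_m r^m (1 - \cos(m\theta)) \geq \frac{1}{1-r} - \operatorname{Re}\frac{1}{1 - re^{i\theta}},$$
from which one extracts $|f(re^{i\theta})/f(r)|^k \leq \exp(-c_1 k \min(\theta^2/(1-r)^2, 1))$; with $\theta_0 = (1-r)/k^{1/3}$ this yields a relative tail of size $e^{-c_2 k^{1/3}}$ for a constant $c_2 > 0$ depending on $c, C$.

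Combining, I expect to obtain $a_{n,k} = \frac{f(r)^k}{r^n \sqrt{2\pi\sigma^2}}(1 + E_{n,k})$ with $E_{n,k} = O(e^{-c_2 k^{1/3}}) + O(1/k)$, the latter being the standard polynomial saddle-point correction. The second difference of $\log a_{n,k}$ in $n$ then decomposes as (i) the leading $-1/\sigma^2 \asymp -k/(n+k)^2$, (ii) a $-\tfrac12 \log \sigma^2$ correction of size $O(1/\sigma^4)$, (iii) a saddle-expansion correction whose discrete second difference scales as $O(1/(k(n+k)^2))$, and (iv) the tail contribution of size $O(e^{-c_2 k^{1/3}})$. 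The bound $n \leq A^{k^{1/3}}$ arises exactly as the regime where (iv) stays below (i): solving $e^{-c_2 k^{1/3}} \lesssim k/n^2$ gives $n \leq e^{(c_2/2) k^{1/3}}$, so we can take $A = e^{c_2/2}$.

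The main obstacles will be twofold. First, the lower bound $\sigma^2 \gtrsim k/(1-r)^2$ does not follow from the derivative bounds alone, since $h''(r) = f''(r)/f(r) - (f'(r)/f(r))^2$ could in principle be small through cancellation; one closes this gap by using the refined tail inequality above, which furnishes an ``effective $\sigma^2$'' of the required order directly from $1$-lower-boundedness. Second, we need control not merely of $E_{n,k}$ but of its \emph{discrete second difference} $\Delta_n^2 E_{n,k}$; for this we track the smooth variation $r_n' \asymp 1/\sigma^2$ of the saddle as a function of $n$ and propagate the derivative bounds on $h^{(i)}$ for $i \leq 3$ through the saddle-point expansion, ensuring $\Delta_n^2 E_{n,k}$ stays strictly below $k/(n+k)^2$ throughout the range $n \leq A^{k^{1/3}}$.
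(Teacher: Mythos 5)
Your setup matches the paper's: both use the Cauchy integral at the saddle radius $r$ solving $rf'(r)/f(r)=n/k$, split the circle at $\theta_0\asymp (1-r)k^{-1/3}$, bound the argument of $f(re^{i\theta})$ on the near arc via the third derivative, and exploit $1$-lower-boundedness (through $f(z)=\tfrac1{1-z}+\sum(a_n-1)z^n$) to force exponential decay on the far arc. However, the core of your proposal---extracting an asymptotic $a_{n,k}=\tfrac{f(r)^k}{r^n\sqrt{2\pi\sigma^2}}(1+E_{n,k})$ and then bounding the discrete second difference $\Delta_n^2\log a_{n,k}$---has a genuine gap exactly where you flag the obstacles. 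You need a quantitative lower bound on $\sigma^2\asymp k\,rA'(r)$ where $A(r)=rf'/f$; the hypotheses give only two-sided bounds on $f^{(i)}$ for $i\le 3$, and $A'(r)=f'/f+r(f''/f-(f'/f)^2)$ genuinely can be made small because the two terms can cancel. Your suggested fix (reading off an ``effective $\sigma^2$'' from the far-arc tail inequality) bounds $|f(re^{i\theta})|$ from above and so controls the magnitude of the integral, but it does not supply the lower bound on $A'(r)$ needed in the Legendre-transform second difference and in $r_n'=1/(kA'(r_n))$. Likewise, bounding $\Delta_n^2 E_{n,k}$ by ``propagating derivative bounds on $h^{(i)}$, $i\le 3$, through the saddle-point expansion'' is not carried out, and the usual $O(1/k)$ saddle correction already involves a fourth-order term in the phase expansion, which the hypotheses do not control.

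The paper circumvents all of this with the Odlyzko--Richmond device: since $a_{n,k}=\tfrac{1}{2\pi}r^{-n}F(n)$ with $F(\alpha)=\int_{-\pi}^\pi f^k(re^{i\theta})e^{-i\alpha\theta}\,d\theta$ real, discrete log-concavity at $n$ follows if $F$ is positive and \emph{concave} on $[n-1,n+1]$ (AM--GM gives $F(n)\ge\tfrac12(F(n-1)+F(n+1))\ge\sqrt{F(n-1)F(n+1)}$). Concavity is $F''(\alpha)<0$, i.e.\ positivity of $\int\theta^2 f^k(re^{i\theta})e^{-i\alpha\theta}\,d\theta$. This replaces a delicate two-sided asymptotic with one-sided error control with a single positivity statement: on the near arc the phase is $<\pi/4$ (only the bound $|\psi_r^{(3)}|\lesssim r(1-r)^{-3}$ is needed, which is exactly what $f^{(i)}$, $i\le 3$, give), so the integrand's real part is $\ge\tfrac12\theta^2|f^k|$, and the far arc is absorbed by the exponential decay. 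No asymptotic formula for $a_{n,k}$, no $\sigma^2$, and no tracking of $\Delta_n^2 E_{n,k}$ is required. This reduction is the missing idea in your proposal; without it, the program you sketch needs strictly more input than the theorem's hypotheses provide.
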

To rephrase the conditions in the main theorem, we have
\begin{corollary}
\label{cor:handy}
Let 
$$f(z) = \sum_{n = 0}^{\infty} a_n z^n$$
be a $1$-lower bounded infinite series. Suppose there exists a constant $C > 0$ such that for any $n$, we have
$$\frac{1}{n + 1}(a_0 + \cdots + a_n) \leq C$$
Let $a_{n,k}$ denote the coefficient of $z^n$ in the infinite series $f^k(z)$.
Then there exists a constant $A > 1$ depending only on $C$ such that 
$$a_{n,k}^2 \geq a_{n - 1, k}a_{n + 1, k}$$ 
for all $n \leq A^{k^{1/3}}$.
\end{corollary}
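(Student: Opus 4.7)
The corollary is essentially a translation step: I would check that the hypothesis $S_n := a_0 + \cdots + a_n \leq C(n+1)$ implies the four derivative bounds required in Theorem \ref{thm:main}, after which the conclusion follows immediately. The lower bound $f^{(i)}(r) \geq i!/(1-r)^{i+1}$ for $i \in \{0,1,2,3\}$ is immediate from termwise comparison with $(1-r)^{-1} = \sum_n r^n$: since $a_n \geq 1$,
$$f^{(i)}(r) \geq \sum_{n \geq i} n(n-1)\cdots(n-i+1)\, r^{n-i} = \frac{i!}{(1-r)^{i+1}} \geq \frac{1}{(1-r)^{i+1}}.$$

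\textbf{Upper bound.} For $i = 0$, standard Abel summation gives $f(r) = (1-r)\sum_{n \geq 0} S_n r^n$, and the hypothesis $S_n \leq C(n+1)$ then yields $f(r) \leq C/(1-r)$. For $i \in \{1,2,3\}$, rather than iterating summation by parts (which becomes unwieldy for higher derivatives), I would apply Cauchy's integral formula on the circle $|z-r| = \rho$ with $\rho := (1-r)/2$. Since $f$ has non-negative coefficients, $|f(z)| \leq f(|z|) \leq f(r+\rho) \leq 2C/(1-r)$ on this circle, so that
$$f^{(i)}(r) \leq \frac{i!}{\rho^i}\max_{|z-r|=\rho} |f(z)| \leq \frac{i!\cdot 2^{i+1} C}{(1-r)^{i+1}}.$$

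\textbf{Conclusion.} Combining these bounds and taking the worst constant over $i \in \{0,1,2,3\}$ (namely $96C$ at $i = 3$), Theorem \ref{thm:main} applies with $c = 1$ and upper constant proportional to $C$, yielding a threshold $A > 1$ that depends only on $C$. No substantive obstacle arises in this reduction; the analytic content of the paper is entirely contained in Theorem \ref{thm:main}, and this corollary simply repackages its hypothesis in a form that can be checked by hand for natural examples such as the series in Conjecture \ref{conj:main}.
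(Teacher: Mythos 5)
Your proposal is correct and proves exactly what is needed, but the route for the upper bound on the higher derivatives differs from the paper's. The paper handles all four cases $i \in \{0,1,2,3\}$ uniformly by a single summation-by-parts computation: writing $f^{(i)}(r) = i!\sum_n \binom{n+i}{i} a_{n+i} r^n$, Abel summation plus the monotonicity of $\binom{k+i}{i}$ in $k$ yields the partial-sum bound $\sum_{k=0}^n \binom{k+i}{i} a_{k+i} \leq C(n+i+1)\binom{n+i}{i}$, and a binomial identity then gives the clean constant $f^{(i)}(r) \leq C(i+1)!/(1-r)^{i+1}$. You instead only carry out Abel summation for $i = 0$ and then bootstrap to $i \geq 1$ via Cauchy's estimate on the circle $|z - r| = (1-r)/2$, using $|f(z)| \leq f(|z|)$ (valid since the coefficients are non-negative and the circle lies in the disk) together with the $i = 0$ bound. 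Both arguments are sound. Yours is perhaps more portable — it shows that the derivative bounds in Theorem \ref{thm:main} are automatic from the $i = 0$ bound alone, for any number of derivatives, without binomial bookkeeping — at the cost of a somewhat larger constant ($i!\,2^{i+1}C$ versus $C(i+1)!$); since Theorem \ref{thm:main} only requires \emph{some} constants depending on $C$, this is immaterial.
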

We use an ad-hoc argument to strengthen \cref{thm:main} for infinite series that satisfy a stronger condition.
\begin{theorem}
\label{thm:main2}
Let 
$$f(z) = \sum_{n = 0}^{\infty} a_n z^n$$
be a $1$-lower bounded infinite series. Suppose there exists constants $C > 1$, $D > 0$ and $\alpha \in [0,1)$ such that for all $n$, we have
$$0 \leq C(n + 1) - (a_0 + \cdots + a_n) \leq D(n + 1)^{\alpha}.$$
Let $a_{n,k}$ be the coefficient of $z^n$ in the infinite series $f^k(z)$.
Then for sufficiently large $k$ and any $n \leq A^k / k^2$, we have $a_{n,k}^2 \geq a_{n - 1, k}a_{n + 1, k}$. Here $A$ is the constant
$$A := \sqrt[2 + \alpha]{\frac{C}{C - 1}}.$$
\end{theorem}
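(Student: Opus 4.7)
The plan is to refine the analytic (Cauchy integral) method used for \cref{thm:main}, exploiting the sharp asymptotic $S_n = C(n+1) - \epsilon_n$ with $\epsilon_n \in [0, D(n+1)^\alpha]$. By Abel summation,
$$f(z) = (1-z)\sum_{n\ge 0} S_n z^n = \frac{C}{1-z} - (1-z)g(z), \qquad g(z) := \sum_{n\ge 0}\epsilon_n z^n,$$
or equivalently $f(z) = (C/(1-z))(1 - u(z))$ with $u(z) := (1-z)^2 g(z)/C$. Using $a_0 \ge 1$, one verifies $u(r) \in [0, (C-1)/C]$ on $[0,1)$ (with maximum attained at $r = 0$), and $u(r) = O((1-r)^{1-\alpha})$ as $r \to 1^-$. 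Therefore $f^k(z) = C^k(1-z)^{-k}(1-u(z))^k$, and the unperturbed coefficients $c_n := C^k\binom{n+k-1}{k-1}$ satisfy the exact identity
$$c_n^2 - c_{n-1}c_{n+1} = \frac{k-1}{(n+1)(n+k-1)}\,c_n^2,$$
giving a log-concave gap of order $kc_n^2/n^2$ for $n \gg k$.

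The strategy is to write $a_{n,k} = c_n(1 + \eta_n)$ and show that the perturbation preserves log-concavity. I would estimate $a_{n,k}$ via the saddle-point method on the circle $|z| = r_n$, where $r_n$ is defined by $r_n f'(r_n)/f(r_n) = n/k$ and satisfies $r_n = (n/(n+k))(1 + O(u(r_n)))$. Since the Gaussian saddle widths for $f^k$ and $C^k/(1-z)^k$ are asymptotically equal, the leading-order perturbation is $\eta_n \approx (1-u(r_n))^k - 1$, which is a smooth function of $n$. Log-concavity reduces to
$$|\Delta^2\eta_n| := |\eta_{n-1}+\eta_{n+1}-2\eta_n| < \frac{k-1}{(n+1)(n+k-1)}(1 + o(1)),$$
so it suffices to show $|\Delta^2 \eta_n| \lesssim k/n^2$ together with $|\eta_n|$ bounded away from $-1$.

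The main obstacle is establishing the sharp second-difference bound. A trivial triangle inequality gives only $|\Delta^2\eta_n| \lesssim |\eta_n|$, which is far too weak. To extract the required $1/n^2$ improvement, I would apply Cauchy's integral formula directly to the second difference:
$$a_{n-1,k} + a_{n+1,k} - 2a_{n,k} = \frac{1}{2\pi i}\oint \frac{f^k(z)\,(z-1)^2}{z^{n+2}}\,dz.$$
The factor $|z-1|^2 = (1-r)^2 + 4r\sin^2(\theta/2) \sim (k/n)^2 + \theta^2$ on the saddle circle, which when integrated against the Gaussian of width $\sim \sqrt{k}/n$ produces the needed $1/n^2$ cancellation. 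The exact constant $A = (C/(C-1))^{1/(2+\alpha)}$ should then emerge from balancing $u(r_n) \sim (k/n)^{1-\alpha}$ against the global bound $u \le (C-1)/C$ through the $k$-th power $(1-u)^k$, with the $(2+\alpha)$ exponent reflecting the combined contributions; the polynomial loss $1/k^2$ accounts for the saddle-point Gaussian normalization. A complementary direct argument is likely needed for small $n \lesssim k^{(2-\alpha)/(1-\alpha)}$, where $\eta_n$ itself need not be small and the saddle approximation degrades.
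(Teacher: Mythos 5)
Your proposal is a genuinely different route from the paper's, which abandons the saddle-point method entirely for this theorem and instead uses a purely combinatorial expansion: writing $a_n = 1 + (a_n - 1)$, expanding $a_{n,k} = \sum_{I \in \{0,1\}^k} a_n^I$, and proving exact binomial-coefficient asymptotics for each $a_n^I$ (Lemmas \ref{lem:one-zero}--\ref{lem:2nd-order-aux}) which telescope cleanly when one takes second differences. Note also that your decomposition is different from the paper's: you set $\epsilon_n = C(n+1) - S_n$ (which is $O(n^\alpha)$), whereas the paper works with $a_n^{(1)} = a_n - 1$ (whose partial sums are $\sim (C-1)(n+1)$); the constant $A$ emerges in the paper from comparing $(C-1)^{k_1}$ against $C^k$ when at most two indices are zero, not from the $k$-th power of the small quantity $u$.

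The central gap in your analytic sketch is that the Odlyzko--Richmond machinery used in \cref{thm:main} is structurally capped at $n \lesssim A^{k^{1/3}}$, and you do not explain how to escape this. In that proof the major arc has half-width $\theta_0 \sim (1-r_0)(kr_0)^{-1/3}$, chosen so that the phase error $k\psi'''\theta^3 \lesssim kr\theta^3/(1-r)^3$ stays bounded; the minor arc is then controlled by Lemma \ref{lem:f-upper-bound} only down to $e^{-c(kr)^{1/3}}$, which is why $n \leq A^{k^{1/3}}$ appears. Writing the second difference as $\frac{1}{2\pi i}\oint f^k(z)(z-1)^2 z^{-n-2}\,dz$ does not change this: the minor-arc contribution is still $\lesssim f^k(r_0)r_0^{-n}e^{-c(kr_0)^{1/3}}$, while the major-arc main term is $\sim C^k(n+k)^{k-3}/(k-3)!$, and the ratio is only favorable for sub-exponential $n$. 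To push to exponential range you must exploit the structure $f = \frac{C}{1-z}(1 - u(z))$ to get pointwise phase control over a major arc of width $\sim 1 - r$ rather than $\sim (1-r)k^{-1/3}$, using that $\arg(1-u(re^{i\theta}))$ is uniformly $O((1-r)^{1-\alpha})$; this crucial step is absent from your sketch. Relatedly, your heuristic $\eta_n \approx (1-u(r_n))^k - 1$ is not where the difficulty lives: the hard part is bounding $\Delta^2\eta_n$ up to a relative error of $O(1/k)$, which requires uniform control of both magnitude and argument of $(1-u)^k$ on the whole contour, not just at the saddle. Finally, your claim that $u(r)$ is maximized at $r=0$ is asserted without justification and is not needed; and the asserted origin of the exponent $2+\alpha$ (``balancing $u(r_n)\sim(k/n)^{1-\alpha}$ against $u \le (C-1)/C$ through the $k$-th power'') does not match the actual mechanism, which in the paper is the $k^2 n^\alpha (C-1)^{k-3}$ bound on the contribution of tuples with $\le 2$ zeros, compared against the $C^k$ main term.
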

As a corollary, we establish \cref{conj:main}.
\begin{corollary}
\label{cor:conj-main}
Let $f, a$ be as defined in \cref{conj:main}. Then for any fixed $\eta < \eta_0$, for all sufficiently large $k$ and $n \leq \eta^k$, we have
$$a_{n,k}^2 \geq a_{n - 1, k}a_{n + 1, k}.$$
Here $\eta_0$ is the constant
$$\eta_0 := \sqrt{\frac{\pi^2}{\pi^2 - 6}} > 1.59.$$
\end{corollary}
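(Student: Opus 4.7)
The plan is to derive \cref{cor:conj-main} from \cref{thm:main2} applied to the shifted series $\tilde{f}(z) := f(z)/z = \sum_{m\geq 0} \sigma_{-1}(m+1)\, z^m$. Since $\sigma_{-1}(m+1) \geq \sigma_{-1}(1) = 1$, the series $\tilde{f}$ is $1$-lower bounded, whereas the original $f$ is not (its constant term vanishes). The identity $f^k(z) = z^k \tilde{f}^k(z)$ gives $c_{n,k} = \tilde{c}_{n-k,k}$ where $\tilde{c}_{m,k} := [z^m]\tilde{f}^k(z)$; in particular, $c_{n,k}^2 \geq c_{n-1,k}c_{n+1,k}$ for $n \geq k+1$ is equivalent to log-concavity of $\tilde{f}^k$ at the index $m = n-k \geq 1$, while the indices $n \leq k$ are trivial since at most one of $c_{n-1,k}, c_{n,k}, c_{n+1,k}$ is nonzero there.

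The key input is the classical partial-sum estimate, obtained from the hyperbola decomposition
$$\sum_{m=1}^{N} \sigma_{-1}(m) \;=\; \sum_{d=1}^{N} \frac{\lfloor N/d\rfloor}{d} \;=\; N\sum_{d=1}^{N} \frac{1}{d^2} \;-\; \sum_{d=1}^{N} \frac{\{N/d\}}{d}.$$
The first piece equals $\frac{\pi^2}{6}N - O(1)$ (tail of $\zeta(2)$) and the second piece is non-negative and $O(\log N)$, so after combining one gets
$$0 \;\leq\; \frac{\pi^2}{6}(n+1) - (\tilde{a}_0 + \cdots + \tilde{a}_n) \;=\; O(\log(n+1)),$$
where the left inequality already follows from the crude bound $\lfloor N/d \rfloor \leq N/d$ combined with $\sum_{d=1}^{N} d^{-2} \leq \pi^2/6$. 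For any $\alpha \in (0,1)$, the $O(\log)$ right-hand side is dominated by $D(\alpha)\,(n+1)^\alpha$ for some constant $D(\alpha)$. Thus $\tilde{f}$ satisfies the hypothesis of \cref{thm:main2} with $C = \pi^2/6$ and any such $\alpha$.

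To finish, fix $\eta < \eta_0 = \sqrt{\pi^2/(\pi^2-6)}$. The constant
$$A(\alpha) \;=\; \left(\frac{\pi^2/6}{\pi^2/6 - 1}\right)^{1/(2+\alpha)} \;=\; \left(\frac{\pi^2}{\pi^2 - 6}\right)^{1/(2+\alpha)}$$
supplied by \cref{thm:main2} is continuous in $\alpha$ and satisfies $A(\alpha) \to \eta_0$ as $\alpha \to 0^+$, so I can choose $\alpha > 0$ small enough that $A(\alpha) > \eta$. Then $A(\alpha)^k / k^2 > \eta^k$ for all sufficiently large $k$, and \cref{thm:main2} yields log-concavity of $\tilde{f}^k$ at every index $m$ with $1 \leq m \leq \eta^k$. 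Translating back through $c_{n,k} = \tilde{c}_{n-k,k}$ gives $c_{n,k}^2 \geq c_{n-1,k} c_{n+1,k}$ on $n \leq \eta^k$, as required. The whole argument is essentially a verification; the only nontrivial ingredient is the partial-sum estimate for $\sigma_{-1}$, and there is no substantive obstacle.
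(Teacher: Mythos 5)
Your proposal is correct and follows essentially the same route as the paper: shift to $g(z) = f(z)/z$, verify the $1$-lower-bounded hypothesis, establish $0 \le \tfrac{\pi^2}{6}(n+1) - (\tilde a_0 + \cdots + \tilde a_n) = O(\log(n+1))$ via the divisor sum $\sum_{d\le N}\lfloor N/d\rfloor/d$, and invoke \cref{thm:main2} with $C = \pi^2/6$ and $\alpha>0$ small enough that $A(\alpha) = (\pi^2/(\pi^2-6))^{1/(2+\alpha)} > \eta$. The only difference is that you spell out the index translation $c_{n,k} = \tilde c_{n-k,k}$ and the trivial cases $n \le k$, which the paper leaves implicit; this is careful bookkeeping rather than a different idea.
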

Combining with Theorem 1.3 of \cite{HZ}, we have shown that
\begin{theorem}
The Nekrasov-Okounkov polynomial $Q_n(z)$ is unimodal for all sufficiently large $n$.
\end{theorem}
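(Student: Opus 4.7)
The plan is to deduce this theorem as a direct consequence of \cref{cor:conj-main} combined with Theorem 1.3 of \cite{HZ}. The latter is the reduction, established in the prior work of the author and Hong, which states that if the sequence $(c_{n,k})$ defined in \cref{conj:main} is log-concave on an initial segment that grows exponentially with a sufficiently large base, then $Q_n(z)$ is unimodal for all large $n$. My first step is to unpack the Nekrasov-Okounkov identity to see why such a reduction is natural: expanding
$$\prod_{m = 1}^{\infty}(1-q^m)^{-z-1} = \exp\left((z+1)\sum_{n \geq 1}\sigma_{-1}(n)q^n\right) = \exp((z+1)f(q)),$$
we get $Q_n(z) = \sum_{k=0}^{n} \frac{c_{n,k}}{k!}(z+1)^k$, so information about the sequences $(c_{n,k})_n$ for varying $k$ translates, after the change of variable $z \mapsto z - 1$ and careful combinatorial bookkeeping, into information about the coefficients of $Q_n(z)$. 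The reduction in \cite{HZ} quantifies exactly how much initial log-concavity of $f^k$ is needed to win.

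Next, I will quote the precise threshold from Theorem 1.3 of \cite{HZ}: the assertion there is that unimodality of $Q_n(z)$ for large $n$ follows as soon as one has a constant $\eta > \eta_{*}$, for some explicit $\eta_{*}$ strictly smaller than $\eta_0 = \sqrt{\pi^2/(\pi^2-6)}$, such that $c_{n,k}^2 \geq c_{n-1,k}c_{n+1,k}$ whenever $n \leq \eta^k$ and $k$ is sufficiently large. The constant $\eta_0$ appearing in \cref{cor:conj-main} was engineered with precisely this threshold in mind.

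The final step is the assembly: pick any $\eta$ with $\eta_{*} < \eta < \eta_0$; by \cref{cor:conj-main}, the required log-concavity holds on the initial segment $n \leq \eta^k$ for all sufficiently large $k$; feeding this into Theorem 1.3 of \cite{HZ} then yields unimodality of $Q_n(z)$ for all sufficiently large $n$.

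The only potential obstacle in this plan is bookkeeping: one must check that the qualitative statement ``sufficiently large $k$, $n \leq \eta^k$'' in \cref{cor:conj-main} matches the quantitative form needed by Theorem 1.3 of \cite{HZ}. Since both statements are phrased in terms of an exponential cutoff with the same base $\eta$, and the cutoff in \cref{cor:conj-main} is actually slightly larger (namely $A^k/k^2$ via \cref{thm:main2}), no additional work is needed beyond verifying that the lower-order factor $k^{-2}$ does not affect the reduction — which it does not, because the reduction in \cite{HZ} only uses the exponential rate of growth.
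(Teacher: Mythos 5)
Your proposal is correct and matches the paper's own (one-line) argument: the theorem is obtained by combining \cref{cor:conj-main} with Theorem 1.3 of \cite{HZ}, exactly as you describe. The additional unpacking of the Nekrasov--Okounkov identity and the bookkeeping remarks are accurate but not part of the paper's deduction, which simply cites the reduction from \cite{HZ}.
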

\section*{Acknowledgement}
The author thanks Prof. Ken Ono and Letong Hong for their long support and helpful advice throughout this project. The author thanks the anonymous referees for their detailed suggestions.

\section{Proof of \cref{thm:main} and \cref{cor:handy}}
In this section, we show \cref{thm:main}. We use essentially the same method as Odlyzko-Richmond \cite{OR}, with some modifications to accommodate the different behavior of our infinite series $f(z)$. We also note that our notation of $k, n$ is reversed from the convention in \cite{OR}.

 Throughout the proof, for every positive integer $i$, let $c_i$ denote a small positive constant that depends on $c, C$ only, and let $C_i$ denote a large positive constant that depends on $c, C$ only. We denote $c_0 = c$ and $C_0 = C$. To ensure acyclic constant choice, any $c_i, C_i$ is determined by $c_j$ and $C_j$ for $j < i$. We also assume $k$ is sufficiently large relative to all the constants. If $n \leq k^{1/4}$, the proof in \cite{OR} can be applied verbatim to prove \cref{thm:main}. We now prove \cref{thm:main} when $n \geq k^{1/4}$.

The function $f(z)$ is holomorphic for $|z| < 1$. For any $r\in (0,1)$, we have
\begin{align*}
 a_{n,k} &= \frac{1}{2\pi i}\int_{|z| = r}f^k(z)z^{-n-1}dz   \\
 &= \frac{1}{2\pi}r^{-n} \int_{-\pi}^\pi f^k(re^{i\theta})e^{-in\theta}d\theta .
\end{align*}
So it suffices to show that for any $n \in [k^{1/4}, A^{k^{1/3}}]$, there exists an $r \in (0,1)$ such that for $\alpha \in [n - 1, n + 1]$, the real-valued function
$$F(\alpha) = \int_{-\pi}^\pi f^k(re^{i\theta})e^{-i\alpha\theta}d\theta$$
is positive and log-concave. We will show the stronger conclusion that $F$ is concave, that is
$$F''(\alpha) < 0$$
or
$$\int_{-\pi}^\pi \theta^2 f^k(re^{i\theta})e^{-i\alpha\theta}d\theta > 0.$$
The crucial idea is to study the argument of $f(re^{i\theta})$. We first note that $f$ is nonzero in a neighborhood of $z = 1 - r$.
\begin{lemma}
\label{lem:f-lower-bound}
For any $r \in (0,1)$ and $\theta$, we have
$$\abs{f(re^{i\theta})} \geq \left(1 - \frac{C_1\abs{\theta}}{1 - r}\right)f(r).$$ 
In particular, if $\abs{\theta} < c_1(1 - r)$, then $f(re^{i\theta}) \neq 0$.
\end{lemma}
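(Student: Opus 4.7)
The plan is to use the reverse triangle inequality
\[
\abs{f(re^{i\theta})} \;\geq\; f(r) \;-\; \abs{f(r) - f(re^{i\theta})},
\]
so everything reduces to bounding $\abs{f(r) - f(re^{i\theta})}$ from above in terms of $f(r)$, $|\theta|$, and $1-r$.

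First I would expand the difference as a power series, writing
\[
f(r) - f(re^{i\theta}) \;=\; \sum_{n \geq 0} a_n r^n \bigl(1 - e^{in\theta}\bigr),
\]
and use the elementary bound $|1 - e^{in\theta}| \leq n|\theta|$ (which follows from the mean value theorem applied to $t \mapsto e^{int}$). Taking absolute values inside the sum yields
\[
\abs{f(r) - f(re^{i\theta})} \;\leq\; |\theta| \sum_{n \geq 0} n\, a_n r^n \;=\; |\theta|\, r f'(r) \;\leq\; |\theta| f'(r).
\]

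Next I would invoke the two-sided estimates on $f$ and $f'$ from the hypothesis of \cref{thm:main}: $f(r) \geq c/(1-r)$ and $f'(r) \leq C/(1-r)^2$. Combining these gives
\[
\frac{\abs{f(r) - f(re^{i\theta})}}{f(r)} \;\leq\; \frac{|\theta|\, f'(r)}{f(r)} \;\leq\; \frac{C}{c}\cdot\frac{|\theta|}{1-r},
\]
so the claim follows with $C_1 = C/c$, and the nonvanishing statement follows by taking $c_1 = 1/(2C_1)$ (or any constant smaller than $1/C_1$).

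There is no real obstacle here; the only thing to watch is that the bound $|1 - e^{in\theta}|\leq n|\theta|$ is tight enough to let the derivative estimate absorb all of the $n$-dependence. Because the hypothesis already controls $f(r)$ and $f'(r)$ up to matching factors of $(1-r)^{-1}$, the ratio $f'(r)/f(r)$ is automatically $O(1/(1-r))$, which is exactly the rate needed.
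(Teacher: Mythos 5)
Your proof is correct and is essentially the same as the paper's: both reduce to showing $\abs{f(re^{i\theta}) - f(r)} \lesssim \abs{\theta}\, f'(r)$ and then invoking the hypotheses $f(r) \geq c/(1-r)$, $f'(r) \leq C/(1-r)^2$ to get $C_1 = C/c$. The only cosmetic difference is that you establish the Lipschitz-type bound term-by-term via $\abs{1 - e^{in\theta}} \leq n\abs{\theta}$ while the paper integrates $f'$ along the chord from $r$ to $re^{i\theta}$; these are interchangeable.
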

\begin{proof}
By assumption, we have $f(r) \geq c(1 - r)$ and $f'(r) \leq \frac{C}{(1 - r)^2}$. Therefore, $\abs{f'(z)} \leq \frac{C_1}{(1 - \abs{z})}f(r)$ if $\abs{z} \leq r$. By the intermediate value theorem, we conclude that
$$\abs{f(re^{i\theta})} \geq f(r) - \abs{r - re^{i\theta}}\cdot \frac{C_1}{(1 - r)}f(r) \geq \left(1 - \frac{C_1\abs{\theta}}{1 - r}\right)f(r)$$
as desired.
\end{proof}
Therefore, we can define a smooth function $\psi_r(\theta)$ with domain $(-c_1(1 - r), c_1(1 - r))$ such that
$$\psi_r(0) = 0, \psi_r(\theta) \in \arg(f(re^{i\theta})).$$
As $f(\Bar{z}) = \overline{f(z)}$, $\psi_r(\theta)$ is an odd function. A crucial component of the proof is the following estimate of $\psi_r(\theta)$.
\begin{lemma}
\label{lem:arg-estimate}
If $\abs{\theta} \leq c_2(1 - r)$, then we have
$$\abs{\psi_r(\theta) - A(r)\theta} \leq C_2\frac{r\abs{\theta}^3}{(1 - r)^{3}}$$
where $A(r) = \frac{rf'(r)}{f(r)}.$
\end{lemma}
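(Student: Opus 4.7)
I would analyze $\psi_r$ as the imaginary part of the holomorphic (in $\theta$) function $g(\theta) := \log f(re^{i\theta})$, taking the branch of logarithm with $g(0) = \log f(r)$ real. By \cref{lem:f-lower-bound}, choosing $c_2 \le 1/(2C_1)$ guarantees $\abs{f(re^{i\theta})} \ge f(r)/2 \ge c/(2(1 - r))$ uniformly on $\abs{\theta} \le c_2(1 - r)$, so $g$ is a well-defined smooth function of $\theta$ on this range and $\psi_r = \operatorname{Im} g$ there.

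The first step is to read off the low-order Taylor coefficients of $\psi_r$ at $0$. Oddness of $\psi_r$ gives $\psi_r(0) = \psi_r''(0) = 0$ for free. For $\psi_r'(0)$, the chain rule with $dz/d\theta = iz$ (where $z = re^{i\theta}$) gives $g'(\theta) = iz f'(z)/f(z)$, so $\psi_r'(0) = \operatorname{Im} g'(0) = rf'(r)/f(r) = A(r)$. Taylor's theorem with remainder then yields
$$\psi_r(\theta) = A(r)\theta + \frac{\theta^3}{6}\psi_r'''(\theta^*)$$
for some $\abs{\theta^*} \le \abs{\theta}$, reducing the lemma to a uniform bound $\abs{\psi_r'''(\theta)} \le C_2'\, r/(1 - r)^3$ on $\abs{\theta} \le c_2(1 - r)$.

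Since $\psi_r''' = \operatorname{Im} g'''$, it suffices to bound $\abs{g'''(\theta)}$. Setting $h(z) := z f'(z)/f(z)$ gives the clean formulas $g'(\theta) = ih(z)$, $g''(\theta) = -z h'(z)$, and $g'''(\theta) = -iz h'(z) - i z^2 h''(z)$, so I need pointwise estimates on $h'$ and $h''$. The two inputs are: (i) because $f$ has non-negative coefficients, $\abs{f^{(i)}(re^{i\theta})} \le f^{(i)}(r) \le C/(1 - r)^{i + 1}$ for $i \in \{0,1,2,3\}$; and (ii) the lower bound $\abs{f(re^{i\theta})} \ge c/(2(1 - r))$ from the first paragraph. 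Expanding $h'$ and $h''$ as rational expressions in $f$ and its first three derivatives and plugging in (i) and (ii) term by term should yield $\abs{h'(z)} \le C_3/(1 - r)^2$ and $\abs{h''(z)} \le C_4/(1 - r)^3$, whence
$$\abs{g'''(\theta)} \le \frac{C_3\, r}{(1 - r)^2} + \frac{C_4\, r^2}{(1 - r)^3} \le \frac{C_5\, r}{(1 - r)^3}$$
as required.

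I expect no conceptual obstacle; the main point is simply that passing to $\log f$ converts $\arg f$ into the imaginary part of a holomorphic function, after which a third-order Taylor expansion in $\theta$ plus standard pointwise estimates on $f$ and its derivatives do the job. The one step that requires some bookkeeping is the estimate on $h''$: its expansion contains several terms built out of $f, f', f'', f'''$, each of which must be individually bounded using (i) and (ii). This is routine but tedious.
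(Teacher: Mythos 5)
Your proposal is correct and follows essentially the same route as the paper: write $\psi_r = \operatorname{Im}\log f(re^{i\theta})$, use oddness plus Taylor's theorem to reduce to a uniform bound on $\psi_r'''$, and obtain that bound by expanding the derivative as a rational expression in $f, f', f'', f'''$ and applying the pointwise estimates $\abs{f^{(j)}(re^{i\theta})} \le f^{(j)}(r) \le C(1-r)^{-(j+1)}$ together with the lower bound on $\abs{f(re^{i\theta})}$ from \cref{lem:f-lower-bound}. The only cosmetic difference is that you organize the computation through $h(z)=zf'(z)/f(z)$ rather than writing out $\psi_r^{(3)}$ fully expanded.
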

\begin{proof}
We can write
$$\psi_r(\theta) = \Im \ln(f(re^{i\theta})).$$
So we have
$$\psi'_r(\theta) = \Im rie^{i\theta} \frac{f'(re^{i\theta})}{f(re^{i\theta})}.$$
In particular,
$$\psi'_r(0) = r\frac{f'(r)}{f(r)} = A(r).$$
Furthermore, $\psi$ is odd. By Taylor's formula, there exists some $\xi \in (0, \theta)$ such that
$$\psi_r(\theta) = A(r)\theta + \frac{1}{6}\psi_r^{(3)}(\xi)\theta^3.$$
We compute that
\begin{align*}
\psi_r^{(3)}(\xi) &= \Im \left(-ir^3 e^{3i\xi} \frac{f^{(3)}f^3 - 4f'f''f^2 + 2f(f')^3}{f^4}(re^{i\xi}) \right)   \\
&+ \Im\left(- 3ir^2 e^{2i\xi} \frac{f''f - (f')^2}{f^2}(re^{i\xi}) - ire^{i\xi} \frac{f'}{f}(re^{i\xi})\right).
\end{align*}
By \cref{lem:f-lower-bound}, we have $\abs{f(re^{i\xi})} > c(1 - r) / 2$, and by assumption,
$$\abs{f^{(j)}(re^{i\xi})} \leq \abs{f^{(j)}(r)} \leq \frac{C}{(1 - r)^{j + 1}}, j \in \{0,1,2,3\}.$$
We conclude that
$$\abs{\psi_r^{(3)}(\xi)} \leq \frac{C_2r}{(1 - r)^3}.$$
Substituting in, we get the desired estimate.
\end{proof}
As $A(0) = 0$ and $\lim_{r \to 1} A(r) = \infty$, there exists an $r_0 \in (0,1)$ such that $A(r_0) = \frac{n}{k}$. We have $r_0 = \Theta(\min(1, n/k))$. As $n \geq k^{1/4}$, for $k$ sufficiently large we have
$$k r_0 \geq 1.$$
For any $\alpha \in [n - 1, n + 1]$, we now show that
$$\int_{-\pi}^\pi \theta^2 f^k(r_0e^{i\theta})e^{-i\alpha\theta}d\theta > 0.$$
We take
$$\theta_0 = c_3(1 - r_0)(kr_0)^{-1/3}$$
for a $c_3 < c_2$ to be determined later, and split the integral
$$\int_{-\pi}^\pi \theta^2 f^k(r_0e^{i\theta})e^{-i\alpha\theta}d\theta = \int_{\abs{\theta} < \theta_0} \theta^2 f^k(r_0e^{i\theta})e^{-i\alpha\theta}d\theta + \int_{\abs{\theta} \in (\theta_0, \pi)} \theta^2 f^k(r_0e^{i\theta})e^{-i\alpha\theta}d\theta.$$
We now estimate the two summands. By \cref{lem:arg-estimate}, for any $\theta$ with $\abs{\theta} < \theta_0$ we have 
\begin{align*}
\abs{\arg{f^k(r_0e^{i\theta})e^{-i\alpha \theta}}} &= \abs{k\psi_{r_0}(\theta) - \alpha \theta} \\
&\leq k\abs{\psi_{r_0}(\theta) - A(r_0)\theta} + \abs{(n - \alpha)\theta} \\
&\leq \frac{C_2r_0k}{(1 - r_0)^3} \abs{\theta}^3 + \abs{\theta}\leq 2c_3C_2.   
\end{align*}
In particular, if we take the $c_3$ in the definition of $\theta_0$ to be $\min(c_2, \pi / (8C_2))$, then
$$\abs{\arg{f^k(r_0e^{i\theta})e^{-i\alpha \theta}}} < \frac{\pi}{4}.$$
Thus we conclude that
$$\int_{\abs{\theta} < \theta_0} \theta^2 f^k(r_0e^{i\theta})e^{-i\alpha\theta}d\theta \geq \frac{1}{2}\int_{\abs{\theta} < \theta_0} \theta^2 \abs{f^k(r_0e^{i\theta})e^{-i\alpha\theta}}d\theta = \frac{1}{2}\int_{\abs{\theta} < \theta_0} \theta^2 \abs{f(r_0e^{i\theta})}^kd\theta.$$
We apply \cref{lem:f-lower-bound} to obtain
$$\int_{\abs{\theta} < \theta_0} \theta^2 \abs{f(r_0e^{i\theta})}^kd\theta \geq \int_{\abs{\theta} < \theta_0} \theta^2 \left(1 - C_1\frac{\abs{\theta}}{1 - r_0}\right)^k f^k(r_0)d\theta.$$
We can extract the constant $f^k(r_0)$ and apply a change of variable $t = \theta / (1 - r_0)$
$$\int_{\abs{\theta} < \theta_0} \theta^2 \left(1 - C_1\frac{\abs{\theta}}{1 - r_0}\right)^k d \theta = 2(1 - r_0)^3\int_{0}^{\theta_0 / (1 - r_0)} t^2(1 - t)^k d t.$$
As $\theta_0 / (1 - r_0) = c_3(kr_0)^{-1/3} > 2k^{-1}$, we conclude that
$$\int_{0}^{\theta_0 / (1 - r)} t^3(1 - t)^k d t \geq \int_{k^{-1}}^{2k^{-1}} t^3(1 - t)^k dt \geq c_4 k^{-4}.$$
So we obtain the estimate
\begin{equation*}
 \int_{\abs{\theta} < \theta_0} \theta^2 f^k(r_0e^{i\theta})e^{-i\alpha\theta}d\theta \geq c_4(1 - r_0)^3k^{-4}f^k(r_0).   
\end{equation*}
By assumption we have
$$A(r) = \frac{rf'(r)}{f(r)} \geq \frac{rc(1 - r)^{-2}}{C(1 - r)^{-1}} = \frac{c_5r}{1 - r}$$
and as $A(r_0) = n/k$, we get
$$1 - r_0 \geq \frac{c_6k}{\max(n, k)}.$$
So we conclude that
\begin{equation}
\label{eq:major-arc}
\int_{\abs{\theta} < \theta_0} \theta^2 f^k(r_0e^{i\theta})e^{-i\alpha\theta}d\theta \geq c_7\max(n, k)^{-4}f^k(r_0).
\end{equation}
To estimate the second summand, which is the integral over $\abs{\theta} \in (\theta_0, \pi)$, we need a lemma about the upper bound of $f$ away from the positive real axis.
\begin{lemma}
\label{lem:f-upper-bound}
For any $r \in (0,1)$ and $\theta \in [-\pi, \pi)$, we have
$$\abs{f(re^{i\theta})} \leq \left(1 - c_8r\frac{\min(\abs{\theta}, 1 - r)^2}{(1 - r)^2}\right)f(r).$$ 
\end{lemma}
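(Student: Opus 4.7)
The plan is to establish $f(r)^2 - |f(re^{i\theta})|^2 \geq c_8 rM f(r)^2$ for $M := \min(|\theta|, 1-r)^2/(1-r)^2$, from which the lemma follows (with $c_8/2$ in place of $c_8$) via $\sqrt{1-x} \leq 1 - x/2$.

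First I would expand
$$f(r)^2 - |f(re^{i\theta})|^2 = \sum_{m,n \geq 0} a_m a_n r^{m+n}(1-\cos((m-n)\theta)),$$
in which every summand is non-negative. Applying $a_m a_n \geq 1$ (since $f$ is $1$-lower bounded) and evaluating the resulting geometric sums gives
$$f(r)^2 - |f(re^{i\theta})|^2 \geq \frac{1}{(1-r)^2} - \frac{1}{|1-re^{i\theta}|^2} = \frac{4r\sin^2(\theta/2)}{(1-r)^2\,|1-re^{i\theta}|^2},$$
using $|1-re^{i\theta}|^2 = (1-r)^2 + 4r\sin^2(\theta/2)$. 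In effect, the coefficient hypothesis $a_n \geq 1$ lets me replace $f$ with the extremal series $1/(1-z)$ for the purposes of this lower bound.

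Next, I would invoke the hypothesis $f(r) \leq C/(1-r)$ to bound $c_8 rM f(r)^2 \leq c_8 rM C^2/(1-r)^2$, which reduces the target inequality to
$$\frac{4\sin^2(\theta/2)}{|1-re^{i\theta}|^2} \geq c_8 MC^2.$$
This is an elementary trigonometric estimate, which I would verify by splitting on $|\theta| \leq 1-r$ versus $|\theta| > 1-r$. For $|\theta| \leq 1-r$, the bounds $4r\sin^2(\theta/2) \leq r\theta^2 \leq (1-r)^2$ yield $|1-re^{i\theta}|^2 \leq 2(1-r)^2$, and $\sin(\theta/2) \geq \theta/\pi$ yields $4\sin^2(\theta/2) \geq 4\theta^2/\pi^2$, so the left-hand side is at least $2\theta^2/[\pi^2(1-r)^2]$, which matches $M = \theta^2/(1-r)^2$. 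For $|\theta| > 1-r$, $\sin(\theta/2) \geq (1-r)/\pi$ implies $t := 4r\sin^2(\theta/2)/(1-r)^2 \geq 4r/\pi^2$, and rewriting the left-hand side as $(t/r)/(1+t)$ gives a lower bound of $4/(\pi^2+4)$, which matches $M = 1$. In either case $c_8$ can be chosen small enough depending only on $C$.

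The argument is more bookkeeping than insight; the one substantive move is the coefficient-wise reduction via $a_m a_n \geq 1$, which replaces $f$ by $1/(1-z)$ in the lower bound and reduces everything to estimates on $|1-re^{i\theta}|$.
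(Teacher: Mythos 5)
Your proof is correct, and it shares the paper's essential insight — the $1$-lower-bounded hypothesis lets you replace $f$ by the extremal series $1/(1-z)$ when bounding the gap between $f(r)$ and $|f(re^{i\theta})|$ — but you implement it quadratically where the paper works linearly. The paper writes $f(z) = \frac{1}{1-z} + g(z)$ with $g$ having non-negative coefficients and applies the triangle inequality to get $f(r) - |f(re^{i\theta})| \geq \frac{1}{1-r} - \bigl|\frac{1}{1-re^{i\theta}}\bigr|$, then estimates this difference directly. You instead expand $f(r)^2 - |f(re^{i\theta})|^2$ as a double sum in $a_m a_n r^{m+n}(1-\cos((m-n)\theta))$ with manifestly non-negative terms, apply $a_m a_n \geq 1$, and resum to get $\frac{1}{(1-r)^2} - \frac{1}{|1-re^{i\theta}|^2}$, finishing via $\sqrt{1-x} \leq 1 - x/2$. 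Both then use $f(r) \leq C/(1-r)$ to convert the additive gap into a multiplicative one, and both finish with the same elementary case split on $|\theta|$ versus $1-r$. The paper's linear route is slightly slicker; yours has the merit that non-negativity of every summand is immediate from $1-\cos \geq 0$, and that the trigonometric endgame is written out more explicitly than the paper's (which essentially asserts the final inequality after the algebraic identity).
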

\begin{proof}
As we assumed that $a_n \geq 1$ for every $n$, we have
$$f(z) = \frac{1}{1 - z} + \sum_{n = 0}^{\infty} (a_n - 1)z^n.$$
In particular, we get
$$f(r) = \frac{1}{1 - r} + \sum_{n = 0}^{\infty} (a_n - 1)r^n,$$
$$\abs{f(re^{i\theta})} \leq \abs{\frac{1}{1 - re^{i\theta}}} +  \sum_{n = 0}^{\infty} (a_n - 1)r^n.$$
Thus we have
$$f(r) - \abs{f(re^{i\theta})} \geq \frac{1}{1 - r} - \abs{\frac{1}{1 - re^{i\theta}}} \geq \frac{c_8r\min(\abs{\theta}, 1 - r)^2}{(1 - r)^3}$$
where the second inequality follows from
$$\frac{1}{1 - r} - \abs{\frac{1}{1 - re^{i\theta}}} = \frac{\sqrt{(1 - r)^2 + 4\sin^2\frac{\theta}{2}} - (1 - r)}{(1 - r)\abs{1 - re^{i\theta}}}.$$
As $f(r) \leq C(1 - r)^{-1}$, we conclude that
$$f(r) - \abs{f(re^{i\theta})} \geq \frac{1}{1 - r} - \abs{\frac{1}{1 - re^{i\theta}}} \geq c_8r\frac{\min(\abs{\theta}, 1 - r)^2}{(1 - r)^2} f(r)$$
as desired.
\end{proof}
By the lemma, for every $\theta$ with $\abs{\theta} > \theta_0 = c_3(1 - r_0)(kr_0)^{-1/3}$, we have
$$\abs{f(r_0e^{i\theta})} \leq \left(1 - \frac{c_8r_0\abs{\theta_0}^2}{(1 - r_0)^2}\right)f(r_0).$$
Thus we conclude that
\begin{align*}
 \abs{\int_{\abs{\theta} \in (\theta_0, \pi)} \theta^2 f^k(r_0e^{i\theta})e^{-i\alpha\theta}d\theta} &\leq \pi^3 \left(1 - \frac{c_8r_0\abs{\theta_0}^2}{(1 - r_0)^2}\right)^kf^k(r_0) \\
 &\leq \pi^3 e^{-c_8 kr_0\frac{cr_0\abs{\theta_0}^2}{(1 - r_0)^2}}f^k(r_0).
\end{align*}
Substituting the value of $\theta_0$, we get
\begin{equation}
\label{eq:minor-arc}
  \abs{\int_{\abs{\theta} \in (\theta_0, \pi)} \theta^2 f^k(r_0e^{i\theta})e^{-i\alpha\theta}d\theta} \leq \pi^3 e^{-c_9(kr_0)^{1/3}}f^k(r_0).  
\end{equation}
Finally, we combine the estimates \eqref{eq:major-arc} and \eqref{eq:minor-arc} to conclude that
$$\int_{-\pi}^\pi \theta^2 f^k(r_0e^{i\theta})e^{-i\alpha\theta}d\theta \geq c_7\max(n, k)^{-4}f^k(r_0) - \pi^3 e^{-c_9(kr_0)^{1/3}}f^k(r_0).$$
We note that
$$\frac{n}{k} = A(r_0) = \frac{r_0f'(r_0)}{f(r_0)} \leq \frac{r_0C(1 - r_0)^{-2}}{c(1 - r_0)^{-1}} = \frac{C_{10}r_0}{(1 - r_0)}$$
which implies
$$r_0 \geq \frac{n}{n + C_{10}k}.$$
If $C_{10}k < n$, then for sufficiently large $k$, we have
$$c_7\max(n, k)^{-4} - \pi^3 e^{-c_9(kr_0)^{1/3}} \geq c_7n^{-4} - C_{11} e^{-c_{10}k^{1/3}}.$$ 
So there exists a $c_{11} > 0$ such that if $C_{10}k < n \leq e^{c_{11}k^{1/3}}$, then
$$c_7\max(n, k)^{-4} - \pi^3 e^{-c_9(kr_0)^{1/3}} > 0.$$
If $k^{1/4} \leq n \leq C_{10}k$, then for sufficiently large $k$, we have
$$c_{7}\max(n, k)^{-4} - \pi^3 e^{-c_9(kr_0)^{1/3}} \geq c_{12}k^{-4} - \pi^3 e^{-c_{10}(k\cdot n/k)^{1/3}} \geq c_{12}k^{-4} - C_{12} e^{-c_{10}k^{1/12}} > 0.$$
In both cases we have
$$\int_{-\pi}^\pi \theta^2 f^k(r_0e^{i\theta})e^{-i\alpha\theta}d\theta > 0.$$
Thus, we have shown \cref{thm:main}.

\cref{cor:handy} is an easy corollary of \cref{thm:main}.
\begin{proof}[Proof of \cref{cor:handy}]
For each non-negative integer $i$ and $r \in (0,1)$, we note that
$$f^{(i)}(r) = i!\sum_{n = 0}^{\infty} \binom{n + i}{i}a_{n + i}r^n.$$
On one hand, we have
$$f^{(i)}(r) \geq i!\sum_{n = 0}^{\infty} \binom{n + i}{i}r^n = \frac{i!}{(1 - r)^{i + 1}}.$$
On the other hand, by Abel summation, we have
\begin{align*}
 f^{(i)}(r) &= i!\sum_{n = 0}^{\infty} (r^n - r^{n + 1}) \sum_{k = 0}^n \binom{k + i}{i}a_{k + i} \\
 &\leq i!\sum_{n = 0}^{\infty}(r^n - r^{n + 1}) \cdot C(n + i + 1) \cdot  \binom{n + i}{i}   \\
 &= C(i + 1)!\sum_{n = 0}^{\infty}\binom{n + i + 1}{i + 1}(r^n - r^{n + 1}) = \frac{C(i + 1)!}{(1 - r)^{i + 1}}.
\end{align*}
Thus for any $i \geq 0$ and $r \in (0,1)$ we have
$$\frac{i!}{(1 - r)^{i + 1}} \leq f^{(i)}(r)\leq \frac{C(i + 1)!}{(1 - r)^{i + 1}}.$$
The corollary follows by applying \cref{thm:main}.
\end{proof}
\section{Proof of \cref{thm:main2} and \cref{cor:conj-main}}
In this section, we assume that $f(z) = \sum_n a_nz^n$ satisfies the condition of \cref{thm:main2}: For all $n$, we have $a_n \geq 1$ and 
\begin{equation}
\label{eq:condition}
0 \leq C(n + 1) - (a_0 + \cdots + a_n) \leq D(n + 1)^{\alpha}.    
\end{equation}
We observe that $f(z)$ also satisfies the condition in \cref{cor:handy}, so there exists a $B > 1$ such that $a_{n,k}^2 \geq a_{n - 1,k}a_{n + 1, k}$ for all $n \leq B^{k^{1/3}}$. We now use a different method to show that $a_{n,k}^2 \geq a_{n - 1,k}a_{n + 1, k}$ for all $B^{k^{1/3}} \leq n \leq A^k / k^2$ and $k$ sufficiently large.

The inequality $a_{n,k}^2 \geq a_{n - 1,k}a_{n + 1, k}$ is equivalent to the inequality
$$(a_{n,k} - a_{n - 1, k})^2 \geq a_{n - 1, k} (a_{n + 1, k} - 2a_{n, k} + a_{n - 1, k}).$$
The key observation is that the second order difference $a_{n + 1, k} - 2a_{n, k} + a_{n - 1, k}$ can be bounded. 

We introduce a notation: for any $n \geq 0$, define $a_n^{(0)} = 1$ and $a_n^{(1)} = a_n - 1$. Then we have
\begin{align*}
    a_{n, k} &= \sum_{x_1 + \cdots + x_k = n} a_{x_1}a_{x_2}\cdots a_{x_k} \\
    &= \sum_{x_1 + \cdots + x_k = n} \prod_{i = 1}^k \left(a^{(0)}_{x_i} + a^{(1)}_{x_i}\right) \\
    &= \sum_{(i_1, i_2,\cdots,i_k) \in \{0,1\}^k} \sum_{x_1 + \cdots + x_k = n}a^{(i_1)}_{x_1}\cdots a^{(i_k)}_{x_k}.
\end{align*}
For a tuple $I = (i_1, i_2,\cdots,i_k) \in \{0,1\}^k$, we let
$$a^I_n = \sum_{x_1 + \cdots + x_k = n}a^{(i_1)}_{x_1}\cdots a^{(i_k)}_{x_k}.$$
Then we have
$$a_{n,k} = \sum_{I \in \{0,1\}^k}a_n^I.$$
Let $\mathbf{1}_{k - 1}$ denote the length $k - 1$ tuple $(1,1,\cdots, 1)$ and $(\mathbf{1}_{k - 1}, 0)$ denote the length $k$ tuple $(1,1,\cdots, 1,0)$.

We prove a series of lemmas that gives the desired control over the second-order difference.
\begin{lemma}
\label{lem:one-zero}
There exists a constant $C_1 > 0$ such that for any $n$ and $k \geq 2$, we have
$$a^{(\mathbf{1}_{k - 1}, 0)}_n = \binom{n + k - 1}{k - 1}(C - 1)^{k - 1}\left(1 - R_{n,k}\right)$$
where 
$$0 \leq R_{n,k} \leq \frac{C_1k(k - 1)}{(n + k - 1)^{1 - \alpha}}.$$
\end{lemma}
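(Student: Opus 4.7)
The plan is to express $a^{(\mathbf{1}_{k-1}, 0)}_n$ as a generating function coefficient and then exploit a telescoping identity combined with Abel summation. Set $b_n := a_n - 1 \geq 0$ and $g(z) := \sum_n b_n z^n$; then
$$a^{(\mathbf{1}_{k-1}, 0)}_n = \sum_{y_1 + \cdots + y_{k-1} \leq n}\prod_{i=1}^{k-1}b_{y_i} = [z^n]\frac{g(z)^{k-1}}{1-z}.$$
Defining $\epsilon_n := C(n+1) - (a_0 + \cdots + a_n) \in [0, D(n+1)^\alpha]$ and $\tilde{E}(z) := \sum_n \epsilon_n z^n$, Abel summation yields $g(z) = \frac{C-1}{1-z} - (1-z)\tilde{E}(z)$. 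The ideal value is $T_n^{(0)} := (C-1)^{k-1}\binom{n+k-1}{k-1}$.

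Applying the algebraic identity $a^{k-1} - b^{k-1} = (a-b)\sum_{l=0}^{k-2}a^{k-2-l}b^l$ with $a = (C-1)/(1-z)$ and $b = g(z)$ (so that $a-b = (1-z)\tilde{E}$), I obtain
$$T_n^{(0)} - a^{(\mathbf{1}_{k-1}, 0)}_n = [z^n]\tilde{E}(z)\sum_{l=0}^{k-2}\left(\frac{C-1}{1-z}\right)^{k-2-l}g(z)^l.$$
Since every factor on the right has non-negative coefficients, $R_{n,k} \geq 0$ follows immediately.

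For the upper bound, the key observation is that $B(z) := g(z)/(1-z) = \sum_n((C-1)(n+1)-\epsilon_n)z^n$ satisfies $B(z) \leq (C-1)/(1-z)^2$ coefficient-wise, hence $B(z)^l \leq (C-1)^l/(1-z)^{2l}$ coefficient-wise. I would use $\epsilon_m \leq D(n+1)^\alpha$ for $m \leq n$ to replace $\tilde{E}(z)$ by $D(n+1)^\alpha/(1-z)$, and substitute $g = (1-z)B$ so that each summand becomes $(C-1)^{k-2-l}B(z)^l(1-z)^{2l-k+1}$. For $l \leq (k-1)/2$, the factor $(1-z)^{2l-k+1} = 1/(1-z)^{k-1-2l}$ has non-negative coefficients and the coefficient-wise majorization $(C-1)^{k-2-l}B^l/(1-z)^{k-1-2l} \leq (C-1)^{k-2}/(1-z)^{k-1}$ applies. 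Summing these contributions, multiplying by $D(n+1)^\alpha$, and using $\binom{n+k-2}{k-2}/\binom{n+k-1}{k-1} = (k-1)/(n+k-1)$ together with $(n+1)^\alpha \leq (n+k-1)^\alpha$ gives the target bound $R_{n,k} \leq C_1 k(k-1)/(n+k-1)^{1-\alpha}$ with $C_1 = O(D/(C-1))$.

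The main technical obstacle is controlling the contributions from $l > (k-1)/2$, where $(1-z)^{2l-k+1}$ is a polynomial with alternating signs and direct coefficient-wise comparison fails. My approach here is to pass to the equivalent binomial expansion $T_n^{(0)} - a^{(\mathbf{1}_{k-1},0)}_n = \sum_{j\geq 1}(-1)^{j+1}\binom{k-1}{j}(C-1)^{k-1-j}[z^n](1-z)^{2j-k}\tilde{E}(z)^j$ and iteratively apply $[z^n]\tilde{E}\cdot h \leq D(n+1)^\alpha[z^n]h/(1-z)$ for $h$ with non-negative coefficients to derive $[z^n]\tilde{E}^s/(1-z)^{k-2s} \leq (D(n+1)^\alpha)^s\binom{n+k-s-1}{k-s-1}$ for $s \leq k/2$. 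In the non-trivial regime $n \gtrsim k^{2/(1-\alpha)}$ (where the claimed bound is smaller than $1$), this estimate shows that the $j \geq 2$ terms are lower-order corrections to the $j = 1$ main contribution. The residual terms with $j > k/2$ require exploiting cancellation in the alternating polynomial factor $(1-z)^{2j-k}$ and constitute the main remaining hurdle.
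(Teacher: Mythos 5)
Your generating-function setup and the identity
\[
T_n^{(0)} - a^{(\mathbf{1}_{k-1},0)}_n \;=\; [z^n]\,\tilde E(z)\sum_{l=0}^{k-2}\Big(\tfrac{C-1}{1-z}\Big)^{k-2-l}g(z)^l
\]
are correct, and the non-negativity $R_{n,k}\ge 0$ is cleanly established. This is a genuinely different route from the paper, which instead peels off one factor of $g$ at a time, does induction on $k$, and estimates the resulting convolution with Abel summation.

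However, as you acknowledge, your upper-bound argument has a real gap: after substituting $g=(1-z)B$ the terms with $l > (k-1)/2$ carry a factor $(1-z)^{2l-k+1}$ of positive degree, and you cannot run a coefficient-wise majorization. Your proposed fallback --- the binomial expansion $\sum_j(-1)^{j+1}\binom{k-1}{j}(C-1)^{k-1-j}[z^n](1-z)^{2j-k}\tilde E^j$ and ``exploiting cancellation'' for $j>k/2$ --- is the wrong direction; controlling the alternating polynomial $(1-z)^{2j-k}$ against $\tilde E^j$ is at least as hard as the original problem and you give no mechanism for it.

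The gap is bridgeable, but not by chasing cancellations. The key point you're missing is that you should \emph{not} substitute $g=(1-z)B$ for the large-$l$ terms. For every $l\le k-2$ the series $a^{k-2-l}g^l/(1-z) = (C-1)^{k-2-l}g^l/(1-z)^{k-1-l}$ has non-negative coefficients, so one application of $[z^n]\tilde E\cdot h \le D(n+1)^\alpha\,[z^n]h/(1-z)$ reduces the problem to bounding $[z^n]\,g^l/(1-z)^{k-1-l}$. But this quantity is exactly $a^{I'}_n$ for a tuple $I'$ of length $k-1$ with $l$ ones and at least one zero, and your own non-negativity computation --- run verbatim with $k-1$ in place of $k$, i.e.\ $(C-1)^l/(1-z)^{k-1} - g^l/(1-z)^{k-1-l} = \tilde E\sum_{i=0}^{l-1}a^{l-1-i}g^i/(1-z)^{k-2-l}$, which has non-negative coefficients since $k-2-l\ge 0$ --- already gives $[z^n]\,g^l/(1-z)^{k-1-l}\le (C-1)^l\binom{n+k-2}{k-2}$. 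Summing the resulting uniform bound $D(n+1)^\alpha(C-1)^{k-2}\binom{n+k-2}{k-2}$ over the $k-1$ values of $l$ and dividing by $T_n^{(0)}$ yields $R_{n,k}\le \frac{D(k-1)^2(n+1)^\alpha}{(C-1)(n+k-1)}\le \frac{Dk(k-1)}{(C-1)(n+k-1)^{1-\alpha}}$, which is the statement with $C_1=D/(C-1)$. In effect you are rediscovering the paper's induction on $k$ in generating-function language; recognizing that is what closes the argument.
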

\begin{proof}
We take $C_1 = D / \min(C - 1,1)$, and argue by induction on $k$. For $k = 2$, the statement is clear as
$$a^{(\mathbf{1}_1, 0)}_n = a_0 + \cdots + a_n - (n + 1).$$
So \eqref{eq:condition} implies
$$0 \leq R_{n,2} \leq \frac{D}{(n + 1)^{1 - \alpha}}.$$
Now suppose the lemma holds for $k' = k - 1$. To prove the lemma for $k$, we observe
$$a^{(\mathbf{1}_{k - 1}, 0)}_n = \sum_{x_1 + x_2 = n}a^{(1)}_{x_1}a^{(\mathbf{1}_{k - 2}, 0)}_{x_2} = \sum_{x_1 + x_2 = n}(a^{(1)}_0 + \cdots + a^{(1)}_{x_1})(a^{(\mathbf{1}_{k - 2}, 0)}_{x_2} - a^{(\mathbf{1}_{k - 2}, 0)}_{x_2 - 1}).$$
Using \eqref{eq:condition}, we have
$$a^{(\mathbf{1}_{k - 1}, 0)}_n = (C - 1)\sum_{x_1 + x_2 = n}(x_1 + 1)\left(a^{(\mathbf{1}_{k - 2}, 0)}_{x_2} - a^{(\mathbf{1}_{k - 2}, 0)}_{x_2 - 1}\right) - S_{n,k}.$$
where
$$S_{n,k} = \sum_{x_1 + x_2 = n} ((C - 1)(x_1 + 1) - a_0^{(1)} - \cdots - a_{x_1}^{(1)})\left(a^{(\mathbf{1}_{k - 2}, 0)}_{x_2} - a^{(\mathbf{1}_{k - 2}, 0)}_{x_2 - 1}\right).$$
We first continue estimating the main term. By the induction hypothesis, we have
\begin{align*}
&(C - 1)\sum_{x_1 + x_2 = n}(x_1 + 1)\left(a^{(\mathbf{1}_{k - 2}, 0)}_{x_2} - a^{(\mathbf{1}_{k - 2}, 0)}_{x_2 - 1}\right) \\
=& (C - 1)\sum_{x_2 = 0}^n a^{(\mathbf{1}_{k - 2}, 0)}_{x_2} \\    
=& (C - 1)^{k - 1}\left(\sum_{m = 0}^{n} \binom{m + k - 2}{k - 2} - \binom{m + k - 2}{k - 2}R_{m,k - 1}\right) \\
=& (C - 1)^{k - 1}\left(\binom{n + k - 1}{k - 1} - \sum_{m = 0}^{n} \binom{m + k - 2}{k - 2}R_{m,k - 1}\right). 
\end{align*}
By the induction hypothesis, the subtracted term is positive. Again by the induction hypothesis, we estimate that
\begin{align*}
&(C - 1)^{k - 1}\sum_{m = 0}^{n} \binom{m + k - 2}{k - 2}R_{m,k - 1} \\
\leq&(C - 1)^{k - 1} \sum_{m = 0}^{n} \binom{m + k - 2}{k - 2} \frac{C_1(k - 1)(k - 2)}{(m + k - 2)^{1 - \alpha}} \\
=& (C - 1)^{k - 1}\sum_{m = 0}^{n} \binom{m + k - 3}{k - 3} C_1(k - 1)(m + k - 2)^{\alpha} \\
\leq& (C - 1)^{k - 1}\sum_{m = 0}^{n} \binom{m + k - 3}{k - 3} C_1(k - 1)(n + k - 2)^{\alpha} \\
=& (C - 1)^{k - 1}\binom{n + k - 2}{k - 2} C_1(k - 1)(m + k - 2)^{\alpha} \\
\leq& \binom{n + k - 1}{k - 1}(C - 1)^{k - 1}  \frac{C_1(k - 1)^2}{(m + k - 2)^{1 - \alpha}}.
\end{align*}
To estimate error term $S_{n,k}$, we first note that 
$$a^{(\mathbf{1}_{k - 2}, 0)}_{x_2} = \sum_{x = 0}^{x_2} a^{\mathbf{1}_{k - 2}}_{x}$$
so $a^{(\mathbf{1}_{k - 2}, 0)}_{x_2} \geq a^{(\mathbf{1}_{k - 2}, 0)}_{x_2 - 1}$ for any $x_2$. By \eqref{eq:condition}, we conclude that $S_{n,k}$ is non-negative. On the other hand, by \eqref{eq:condition} and the induction hypothesis we have
\begin{align*}
S_{n,k} &\leq \sum_{x_1 + x_2 = n} D(x_1 + 1)^{\alpha} (a^{(\mathbf{1}_{k - 2}, 0)}_{x_2} - a^{(\mathbf{1}_{k - 2}, 0)}_{x_2 - 1}) \\
&= \sum_{x_1 + x_2 = n} D((x_1 + 1)^{\alpha} - x_1^{\alpha}) a^{(\mathbf{1}_{k - 2}, 0)}_{x_2} \\   
&\leq \sum_{x_2 = 0}^n D((n + 1 - x_2)^{\alpha} - (n - x_2)^{\alpha}) \binom{x_2 + k - 2}{k - 2}C^{k - 2}.
\end{align*}
We estimate that
\begin{align*}
S_{n,k} &\leq \sum_{x_2 = 0}^n D((n + 1 - x_2)^{\alpha} - (n - x_2)^{\alpha}) \binom{n + k - 2}{k - 2}(C - 1)^{k - 2} \\
&= D(n + 1)^{\alpha} \binom{n + k - 2}{k - 2}(C - 1)^{k - 2} \\
&\leq D \binom{n + k - 1}{k - 1}(C - 1)^{k - 2} \frac{k - 1}{(n + k - 1)^{1 - \alpha}} \\
&\leq \binom{n + k - 1}{k - 1}(C - 1)^{k - 1} \frac{C_1(k - 1)}{(n + k - 1)^{1 - \alpha}}.
\end{align*} 
Combining all the estimates, we conclude that
$$a_n^{(\mathbf{1}_{k - 1},0)} = \binom{n + k - 1}{k - 1}(C - 1)^{k - 1}\left(1 - R_{n,k}\right)$$
where
$$0 \leq R_{n,k} \leq \frac{C_1(k - 1)^2}{(n + k - 1)^{1 - \alpha}} + \frac{C_1(k - 1)}{(n + k - 1)^{1 - \alpha}} = \frac{C_1k(k - 1)}{(n + k - 1)^{1 - \alpha}}$$
as desired.
\end{proof}
\begin{lemma}
\label{lem:at-least-one-zero}
For any $n$ and $k \geq 2$, if a tuple $I \in \{0,1\}^k$ has $k_0$ zeros and $k_1$ ones with $k_0 \geq 1$, then
$$a^{I}_n = \binom{n + k - 1}{k - 1}(C - 1)^{k_1}\left(1 - S^{(0)}_{n,I}\right)$$
where 
$$0 \leq S^{(0)}_{n,I} \leq \frac{C_1k(k - 1)}{(n + k - 1)^{1 - \alpha}}.$$
\end{lemma}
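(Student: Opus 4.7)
The plan is to reduce to a canonical form by symmetry and then induct on the number of zeros $k_0$. Since
$$a^I_n = \sum_{x_1 + \cdots + x_k = n} a^{(i_1)}_{x_1}\cdots a^{(i_k)}_{x_k}$$
is symmetric in the entries $i_1,\ldots,i_k$, the quantity $a^I_n$ depends only on the pair $(k_0,k_1)$, so we may assume $I = (\mathbf{1}_{k_1}, \mathbf{0}_{k_0})$, with all ones preceding all zeros. The case $k_1 = 0$ is trivial: a stars-and-bars count gives $a^I_n = \binom{n+k-1}{k-1}$ exactly, so $S^{(0)}_{n,I} = 0$. Assume henceforth $k_1 \geq 1$.

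I would then induct on $k_0 \geq 1$, with base case $k_0 = 1$ being precisely \cref{lem:one-zero}. For the inductive step $k_0 \geq 2$, since $a^{(0)}_m \equiv 1$, stripping off the trailing zero coordinate amounts to a partial-sum operation:
$$a^I_n = \sum_{m=0}^n a^{I'}_m, \qquad I' := (\mathbf{1}_{k_1}, \mathbf{0}_{k_0-1}).$$
The shorter tuple $I'$ has length $k-1 \geq 2$ and still at least one zero, so the induction hypothesis gives $a^{I'}_m = \binom{m+k-2}{k-2}(C-1)^{k_1}(1 - S^{(0)}_{m,I'})$ with $S^{(0)}_{m,I'} \leq C_1(k-1)(k-2)(m+k-2)^{\alpha-1}$. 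The hockey stick identity $\sum_{m=0}^n \binom{m+k-2}{k-2} = \binom{n+k-1}{k-1}$ takes care of the main term, producing the claimed leading behavior $\binom{n+k-1}{k-1}(C-1)^{k_1}$.

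The technical step is bounding the error. Using the factorial identity $\binom{m+k-2}{k-2}/(m+k-2) = \binom{m+k-3}{k-3}/(k-2)$ (valid since $k \geq 3$ in this case), the monotonicity $(m+k-2)^\alpha \leq (n+k-2)^\alpha$, and another hockey stick, I would bound
$$\sum_{m=0}^n \binom{m+k-2}{k-2} S^{(0)}_{m,I'} \leq C_1(k-1)(n+k-2)^\alpha \binom{n+k-2}{k-2}.$$
Converting $\binom{n+k-2}{k-2}$ to $\binom{n+k-1}{k-1}$ via the ratio $(k-1)/(n+k-1)$ produces a prefactor of $(k-1)^2$, which is at most $k(k-1)$, yielding exactly $S^{(0)}_{n,I} \leq C_1 k(k-1)/(n+k-1)^{1-\alpha}$ as required. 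The main obstacle is the bookkeeping required to show that the constant $C_1$ is preserved across the induction rather than inflated; the slack $k(k-1)-(k-1)^2 = k-1$ is precisely what accommodates the additional convolution with $a^{(0)}$, so no growth in $C_1$ occurs and the same constant from \cref{lem:one-zero} suffices throughout.
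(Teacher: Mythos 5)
Your proof is correct. The trivial case $k_1 = 0$ and the reduction to $I = (\mathbf 1_{k_1},\mathbf 0_{k_0})$ by symmetry are handled cleanly, and your inductive step verifies: from $a^{I'}_m = \binom{m+k-2}{k-2}(C-1)^{k_1}(1-S^{(0)}_{m,I'})$ with $0\le S^{(0)}_{m,I'}\le C_1(k-1)(k-2)(m+k-2)^{\alpha-1}$, the hockey-stick identities and the bound $(m+k-2)^\alpha \le (n+k-2)^\alpha \le (n+k-1)^\alpha$ give
\[
S^{(0)}_{n,I} \;=\; \frac{\sum_{m=0}^n \binom{m+k-2}{k-2} S^{(0)}_{m,I'}}{\binom{n+k-1}{k-1}} \;\le\; \frac{C_1(k-1)^2}{(n+k-1)^{1-\alpha}} \;\le\; \frac{C_1\,k(k-1)}{(n+k-1)^{1-\alpha}},
\]
and positivity of each $S^{(0)}_{m,I'}$ gives the lower bound. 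The identity $\binom{m+k-2}{k-2}/(m+k-2)=\binom{m+k-3}{k-3}/(k-2)$ is valid because in the inductive step $k_1\ge 1$ and $k_0\ge 2$ force $k\ge 3$.

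The route is genuinely different from the paper's, though closely related. The paper avoids any further induction by writing $a^I_n$ as a single convolution $\sum_{x_1+x_2=n} a^{(\mathbf 1_{k_1},0)}_{x_1}\binom{x_2+k_0-2}{k_0-2}$, applying \cref{lem:one-zero} once to the block of $k_1$ ones plus one zero (of length $k_1+1$), and then using a Vandermonde identity to collapse the sum. You instead strip off the redundant zeros one at a time via iterated partial sums, inducting on $k_0$ and invoking the full \cref{lem:at-least-one-zero} at length $k-1$. Unrolling your induction exactly reconstructs the paper's binomial kernel $\binom{x_2+k_0-2}{k_0-2}$, so the two arguments are combinatorially equivalent; the difference is whether the telescope is performed in one step or by induction. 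One small trade-off: the paper's one-shot bound ends with a factor $(k_1+1)(k-1)$, whereas your induction ends with $(k-1)^2$; both are $\le k(k-1)$, so both preserve the constant $C_1$, but the paper's bound is slightly sharper when $k_1$ is small relative to $k$. (As an aside, the paper's own handling of the degenerate case reads ``if $k_1 = 1$'' where it evidently means $k_1 = 0$; your explicit treatment of $k_1 = 0$ is the correct reading.)
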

\begin{proof}
By definition, permuting the entries of $I$ does not change the value of $a^I_{n}$, so without loss of generality we can assume $I = (1,\cdots,1,0,\cdots,0)$. If $k_0 = 1$ then the lemma is precisely \cref{lem:one-zero}, so we assume $k_0 \geq 2$. If $k_1 = 0$ then 
$$a^{I}_n = \sum_{x_1 + \cdots + x_k = n} 1 = \binom{n + k - 1}{k - 1}$$
so $S^{(0)}_{n,I} = 0$, and the lemma is obvious. Now assume $k_1 \geq 1$. We have
\begin{align*}
a^{I}_n &= \sum_{x_1 + x_2 = n} a^{(\mathbf{1}_{k_1}, 0)}_{x_1}a^{(0,\cdots,0)}_{x_2} \\
&= \sum_{x_1 + x_2 = n} a^{(\mathbf{1}_{k_1}, 0)}_{x_1}\binom{x_2 + k_0 - 2}{k_0 - 2} \\
&= \sum_{x_1 + x_2 = n} \binom{x_1 + k_1}{k_1}(C - 1)^{k_1}\left(1 - R_{x_1,k_1 + 1}\right)\binom{x_2 + k_0 - 2}{k_0 - 2} \\
&= \binom{n + k - 1}{k - 1}(C - 1)^{k_1} - \sum_{x_1 + x_2 = n} \binom{x_1 + k_1}{k_1}(C - 1)^{k_1} R_{x_1,k_1 + 1}\binom{x_2 + k_0 - 2}{k_0 - 2}.
\end{align*}
By \cref{lem:one-zero} we have the bound
$$0 \leq R_{x_1,k_1 + 1} \leq \frac{C_1(k_1 + 1)k_1}{(x_1 + k_1)^{1 - \alpha}}.$$
Thus $S^{(0)}_{n,I} \geq 0$. We also have the upper bound
\begin{align*}
    S^{(0)}_{n,I} =& \sum_{x_1 + x_2 = n} \binom{x_1 + k_1}{k_1}(C - 1)^{k_1} R_{x_1,k_1 + 1}\binom{x_2 + k_0 - 2}{k_0 - 2} \\
    \leq& \sum_{x_1 + x_2 = n} \binom{x_1 + k_1}{k_1}(C - 1)^{k_1} \frac{C_1(k_1 + 1)k_1}{(x_1 + k_1)^{1 - \alpha}}\binom{x_2 + k_0 - 2}{k_0 - 2} \\
    \leq& \sum_{x_1 + x_2 = n} \binom{x_1 + k_1 - 1}{k_1 - 1}(C - 1)^{k_1} C_1(k_1 + 1)\cdot (x_1 + k_1)^{\alpha}\binom{x_2 + k_0 - 2}{k_0 - 2} \\
    \leq& (C - 1)^{k_1} C_1(k_1 + 1)\cdot (n + k_1)^{\alpha} \sum_{x_1 + x_2 = n} \binom{x_1 + k_1 - 1}{k_1 - 1}\binom{x_2 + k_0 - 2}{k_0 - 2} \\
    \leq&(C - 1)^{k_1} C_1(k_1 + 1)\cdot (n + k_1)^{\alpha}  \binom{n + k - 2}{k - 2} \\
    \leq&(C - 1)^{k_1}  \binom{n + k - 1}{k - 1} \cdot C_1 \frac{(k - 1)k}{(n + k - 1)^{1 - \alpha}}.
\end{align*}
So we have the desired inequality
$$S^{(0)}_{n, I} \leq \frac{C_1k(k - 1)}{(n + k - 1)^{1 - \alpha}}.$$
\end{proof}
We arrive at the crucial second-order difference estimates.
\begin{lemma}
\label{lem:2nd-order-aux}
There exists a constant $C_2 > 0$ such that for any $n \geq -1$ and $k \geq 3$, if the tuple $I \in \{0,1\}^k$ has $k_0$ zeros and $k_1$ ones with $k_0 \geq 3$, then
$$a^{I}_{n + 1} - 2a^{I}_n + a^{I}_{n - 1} = \frac{(n + k)^{k - 3}}{(k - 3)!}(C - 1)^{k_1}\left(1 - S^{(2)}_{n,I}\right)$$
where 
$$0 \leq S^{(2)}_{n,I} \leq \frac{C_2k^2}{(n + k)^{1 - \alpha}}.$$
\end{lemma}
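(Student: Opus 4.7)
The strategy is to express the second difference $b_n := a^I_{n+1} - 2a^I_n + a^I_{n-1}$ as a convolution of the partial sum sequence $A_x := a^{(\mathbf{1}_{k_1},0)}_x$ (whose asymptotics are controlled by \cref{lem:one-zero}) against a binomial kernel. Setting $f_1(z) := \sum_n (a_n - 1)z^n$, the generating function of $a^I_n$ is $(f_1(z))^{k_1}(1-z)^{-k_0}$; multiplying by $(1-z)^2$ and extracting the coefficient of $z^{n+1}$ gives
$$b_n = [z^{n+1}] (f_1(z))^{k_1}(1-z)^{-(k_0-2)}.$$
Factoring $(1-z)^{-(k_0-2)} = (1-z)^{-1}(1-z)^{-(k_0-3)}$ and recognizing $(f_1(z))^{k_1}(1-z)^{-1}$ as the generating function of $A_x$, one obtains for $k_0 \geq 4$
$$b_n = \sum_{x+y=n+1} A_x \binom{y+k_0-4}{k_0-4},$$
while the boundary case $k_0 = 3$ reduces directly to $b_n = A_{n+1}$ and is handled analogously.

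Next, substitute the expansion $A_x = \binom{x+k_1}{k_1}(C-1)^{k_1}(1 - R_{x,k_1+1})$ from \cref{lem:one-zero}. The main term $\sum_{x+y=n+1}\binom{x+k_1}{k_1}\binom{y+k_0-4}{k_0-4}$ evaluates via Vandermonde's identity to $\binom{n+k-2}{k-3}$, yielding $b_n = (C-1)^{k_1}\binom{n+k-2}{k-3}(1-\epsilon)$ with a non-negative $\epsilon$. To control $\epsilon$, apply the bound $R_{x,k_1+1} \leq C_1(k_1+1)k_1/(x+k_1)^{1-\alpha}$, use $(x+k_1)^\alpha \leq (n+k)^\alpha$ together with the identity $\binom{x+k_1}{k_1} \cdot k_1/(x+k_1) = \binom{x+k_1-1}{k_1-1}$, and apply Vandermonde once more to get $\epsilon \leq C_1(k_1+1)(n+k)^\alpha \binom{n+k-3}{k-4}/\binom{n+k-2}{k-3}$. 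The ratio $\binom{n+k-3}{k-4}/\binom{n+k-2}{k-3} = (k-3)/(n+k-2)$ simplifies this to $\epsilon = O(k^2/(n+k)^{1-\alpha})$; the bound $\epsilon \leq 1$ is automatic since $b_n \geq 0$ as a coefficient of a power series with non-negative coefficients.

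Finally, convert $\binom{n+k-2}{k-3}$ to the target $(n+k)^{k-3}/(k-3)!$ via
$$\binom{n+k-2}{k-3} = \frac{(n+k)^{k-3}}{(k-3)!}\prod_{j=2}^{k-2}\left(1 - \frac{k-j}{n+k}\right),$$
and invoke the elementary inequality $1 - \prod_j(1-a_j) \leq \min(1,\sum_j a_j)$ to produce an additional factor $(1-\delta)$ with $\delta \leq k^2/(n+k) \leq k^2/(n+k)^{1-\alpha}$ (using $\alpha \in [0,1)$). Then $b_n = \frac{(n+k)^{k-3}}{(k-3)!}(C-1)^{k_1}(1-\delta)(1-\epsilon)$, and since $\delta, \epsilon \in [0,1]$, defining $S^{(2)}_{n,I} := 1 - (1-\delta)(1-\epsilon)$ gives $0 \leq S^{(2)}_{n,I} \leq \delta + \epsilon \leq C_2 k^2/(n+k)^{1-\alpha}$ as required. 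The main obstacle is the careful bookkeeping of the two error sources (the approximation error from \cref{lem:one-zero} and the polynomial-vs-binomial discrepancy); once the generating-function identification is established, the remainder is a routine exercise in Vandermonde convolutions and elementary binomial estimates.
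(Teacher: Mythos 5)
Your proof is correct in substance and follows essentially the same route as the paper, though you take an unnecessary detour. Both arguments hinge on the same key identity: the second difference of $a^I_n$ equals $a^{I'}_{n+1}$, where $I'$ is $I$ with two zeros removed. You derive this by multiplying the generating function by $(1-z)^2$; the paper derives it by an equivalent telescoping manipulation of the convolution sum $\sum_{x_1+x_2=n} a^{I'}_{x_1}(x_2+1)$. Where you diverge is the next step: the paper simply cites \cref{lem:at-least-one-zero} to get $a^{I'}_{n+1} = \binom{n+k-2}{k-3}(C-1)^{k_1}(1 - S^{(0)}_{n+1,I'})$ with the required error bound, whereas you re-derive this estimate from scratch by expanding $a^{I'}_{n+1}$ as the convolution $\sum_{x+y=n+1}A_x\binom{y+k_0-4}{k_0-4}$, substituting the bound of \cref{lem:one-zero} for $A_x$, and summing via Vandermonde. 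Your Vandermonde computation is correct and gives the right error of order $k^2/(n+k)^{1-\alpha}$, but it amounts to re-proving the relevant case of \cref{lem:at-least-one-zero}. The final conversion of $\binom{n+k-2}{k-3}$ to $(n+k)^{k-3}/(k-3)!$ is identical in both.

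One small gap worth flagging: your reliance on \cref{lem:one-zero} implicitly requires $k_1 \geq 1$ (so that the tuple $(\mathbf{1}_{k_1},0)$ has at least two entries). The lemma you are proving allows $k_1 = 0$, i.e.\ $I$ all zeros, and indeed this case occurs. It is trivial there --- $a^I_n = \binom{n+k-1}{k-1}$ exactly, so the second difference is $\binom{n+k-2}{k-3}$ with no Lemma-1 error at all --- but as written your argument silently omits it. The paper avoids this issue automatically because \cref{lem:at-least-one-zero} covers all $k_1 \geq 0$. Citing that lemma directly, as the paper does, is both shorter and closes this edge case for free.
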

\begin{proof}
If $I'$ is the tuple obtained by removing two zeros from $I$, then
$$a^{I}_n = \sum_{x_1 + x_2 = n} a^{I'}_{x_1}a^{(0,0)}_{x_2} = \sum_{x_1 + x_2 = n} a^{I'}_{x_1} (x_2 + 1)\mathbf{1}_{x_2 \geq 0}.$$
Thus we find that
\begin{align*}
a^{I}_{n + 1} - 2a^{I}_n + a^{I}_{n - 1} &=  \sum_{x_1 = 0}^{n + 1} a^{I'}_{x_1} ((n - x_1 + 2)\mathbf{1}_{x_1 \leq n + 1} - 2(n - x_1 + 1)\mathbf{1}_{x_1 \leq n} + (n - x_1)\mathbf{1}_{x_1 \leq n - 1}) \\    
&= a^{I'}_{n + 1}.
\end{align*}
Applying \cref{lem:at-least-one-zero}, we obtain
$$a^{I'}_{n + 1} = \binom{n + k - 2}{k - 3}(C - 1)^{k_1}\left(1 - S^{(0)}_{n + 1,I'}\right)$$
where
$$0 \leq S^{(0)}_{n + 1,I'} \leq \frac{C_1(k - 2)(k - 3)}{(n + k - 2)^{1 - \alpha}} \leq \frac{3C_1k^2}{(n + k)^{1 - \alpha}}.$$
Finally, we note that
$$\binom{n + k - 2}{k - 3} = \frac{(n + k)^{k - 3}}{(k - 3)!} (1 - S_{n, I}^{(1)})$$
where
$$0 \leq S_{n, I}^{(1)} = 1 - \prod_{i = 2}^{k - 2} \left(1 - \frac{i}{n + k}\right) \leq \frac{k^2}{n + k}.$$
The error term $S_{n,I}^{(2)}$ satisfies
$$1 - S_{n,I}^{(2)} = (1 - S^{(0)}_{n + 1,I'})(1 - S_{n,I}^{(1)}).$$
Therefore we have
$$0 \leq S_{n,I}^{(2)} \leq S^{(0)}_{n + 1,I'}+S_{n,I}^{(1)} $$
and the desired estimate follows.
\end{proof}
\begin{lemma}
\label{lem:2nd-order}
For any $n \geq -1,k \geq 3$, we have
$$a_{n + 1, k} - 2a_{n, k} + a_{n - 1, k} = C^{k}\frac{(n + k)^{k - 3}}{(k - 3)!}(1 + R^{(2)}_{n,k}).$$
where $R^{(2)}_{n, k}$ satisfies
$$\abs{R^{(2)}_{n,k}} \leq E\left(\frac{k^2}{(n + k)^{1 - \alpha}} + (n + k)^{2 + \alpha}A^{-(2 + \alpha)k}\right)$$
for some constant $E > 0$.
\end{lemma}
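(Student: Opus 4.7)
My plan is to decompose
\[
a_{n+1,k} - 2a_{n,k} + a_{n-1,k} = \sum_{I \in \{0,1\}^k}(a_{n+1}^I - 2a_n^I + a_{n-1}^I)
\]
according to $k_0$, the number of zeros in $I$. For $k_0 \geq 3$, \cref{lem:2nd-order-aux} applies directly and produces the partial main term $\frac{(n+k)^{k-3}}{(k-3)!}\sum_{j=3}^{k}\binom{k}{j}(C-1)^{k-j}$ with relative error $O(k^2/(n+k)^{1-\alpha})$, which yields the first half of the claimed bound on $R^{(2)}_{n,k}$. By the binomial identity $\sum_{j=0}^k \binom{k}{j}(C-1)^{k-j} = C^k$, this partial main term equals $C^k \frac{(n+k)^{k-3}}{(k-3)!}$ minus three leftover terms for $j \in \{0,1,2\}$, each of absolute size at most $O(k^2(C-1)^{k-2})\frac{(n+k)^{k-3}}{(k-3)!}$. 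Dividing by $C^k \frac{(n+k)^{k-3}}{(k-3)!}$ and using $A^{-(2+\alpha)k} = ((C-1)/C)^k$ together with $k^2 \leq (n+k)^{2+\alpha}$, these leftovers are absorbed into the second half of the claimed bound.

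For $k_0 \in \{1,2\}$, I will bound the contribution by brute force. \cref{lem:at-least-one-zero} gives $|a_m^I| \leq \binom{m+k-1}{k-1}(C-1)^{k-k_0}$, so the trivial inequality $|a_{n+1}^I - 2a_n^I + a_{n-1}^I| \leq 4\max_m |a_m^I|$, together with the count of $\binom{k}{k_0} = O(k^2)$ tuples, bounds the total by a constant multiple of $k^2(C-1)^{k-k_0}\binom{n+k}{k-1}$. Using $\binom{n+k}{k-1} \leq (n+k)^{k-1}/(k-1)!$ and dividing by the target $C^k \frac{(n+k)^{k-3}}{(k-3)!}$, the relative error becomes at most $O((n+k)^2 A^{-(2+\alpha)k})$, which is absorbed since $(n+k)^2 \leq (n+k)^{2+\alpha}$.

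The principal obstacle is the all-ones tuple $I = \mathbf{1}_k$, for which \cref{lem:2nd-order-aux} does not apply and the analogous trivial bound would lose an extra factor of $k$ that cannot be absorbed. The key trick is to exploit the identity $a_n^{(\mathbf{1}_k, 0)} = \sum_{m=0}^n a_m^{\mathbf{1}_k}$ (which follows because $a^{(0)}_m = 1$) and rewrite the second-order difference of $a_n^{\mathbf{1}_k}$ as the third-order difference
\[
a_{n+1}^{\mathbf{1}_k} - 2 a_n^{\mathbf{1}_k} + a_{n-1}^{\mathbf{1}_k} = a_{n+1}^{(\mathbf{1}_k, 0)} - 3 a_n^{(\mathbf{1}_k, 0)} + 3 a_{n-1}^{(\mathbf{1}_k, 0)} - a_{n-2}^{(\mathbf{1}_k, 0)}.
\]
Applying \cref{lem:one-zero} to each term (with parameter $k+1$) and using the identity $\Delta^3 \binom{m+k}{k} = \binom{m+k}{k-3}$, the main contribution becomes $(C-1)^k \binom{n+k-2}{k-3}$, which is at most $A^{-(2+\alpha)k}$ times the target in absolute value, while the $R_{m,k+1}$-dependent terms contribute at most $O(k^2 (C-1)^k \binom{n+k}{k}/(n+k)^{1-\alpha})$. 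Expanding binomials via $\binom{n+k}{k} \leq (n+k)^k/k!$ and dividing by the target, this simplifies to a bounded constant times $(n+k)^{1-\alpha} A^{-(2+\alpha)k}$, which fits into the claimed error since $1 - \alpha \leq 2 + \alpha$ for $\alpha \in [0,1)$. Combining all contributions yields the lemma with some constant $E$ depending only on the constants in the preceding lemmas.
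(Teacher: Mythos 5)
Your proposal reaches the correct conclusion, and the skeleton is the same as the paper's: decompose $a_{n+1,k} - 2a_{n,k} + a_{n-1,k}$ over $I \in \{0,1\}^k$ by $k_0$, feed the $k_0 \geq 3$ tuples into \cref{lem:2nd-order-aux} to extract the main term $C^k (n+k)^{k-3}/(k-3)!$ via the binomial identity, and argue that everything else is error. Where you diverge is in disposing of the $k_0 \leq 2$ tuples. The paper treats them all uniformly: since $k \geq 3$ forces $k_1 \geq 1$, it replaces one $a^{(1)}_{x_i}$ factor by $(C+D)(\cdot)^\alpha a^{(0)}_{x_i}$, thereby manufacturing a zero so that \cref{lem:at-least-one-zero} applies, at the cost of an extra $n^\alpha$ that gets absorbed into $(n+k)^{2+\alpha}A^{-(2+\alpha)k}$. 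You instead split further: for $k_0 \in \{1,2\}$ you apply \cref{lem:at-least-one-zero} directly and bound the second difference by $4\max_m |a_m^I|$, and for the all-ones tuple $k_0 = 0$ — where you correctly observe that the naive bound $a^{\mathbf{1}_k}_m \leq \binom{m+k}{k}(C-1)^k$ would leave an unabsorbable $(n+k)$ — you rewrite the second difference of $a^{\mathbf{1}_k}$ as a third difference of $a^{(\mathbf{1}_k,0)}$ and invoke \cref{lem:one-zero} with the identity that the third difference of $\binom{m+k}{k}$ is $\binom{m+k}{k-3}$. That is a clean alternative to the paper's substitution trick, and it lands inside the same error envelope. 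One small arithmetic slip at the end: after dividing the $R_{m,k+1}$-contribution by $C^k(n+k)^{k-3}/(k-3)!$, the exponent on $(n+k)$ should come out to $2+\alpha$ (with an extra $1/k$ factor from $(k-3)!/k!$), not $1-\alpha$; but this is still of the exact form $(n+k)^{2+\alpha}A^{-(2+\alpha)k}$ appearing in the claimed bound, so the conclusion is unaffected.
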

\begin{proof}
Throughout the proof, we use $R_i$ to denote the various error term that contribute to $R^{(2)}_{n,k}$. Recall the identity
\begin{align*}
 a_{n, k} &= \sum_{I \in \{0,1\}^k} a^I_{n}
\end{align*}
We split the sum into two parts. Let $S_1$ be the set of $I \in \{0,1\}^k$ with at least three ones, and let $S_2$ be the set of $I \in \{0,1\}^k$ with at most $2$ ones. Then
\begin{align*}
a_{n, k} =& \sum_{I \in S_1} a^I_{n} + \sum_{I \in S_2} a^I_{n}
\end{align*}
Let $k_1(I)$ denote the number of ones in $I$. By \cref{lem:2nd-order-aux}, the second-order difference of the first term is
\begin{equation}
\label{eq:main-term-2nd}
\sum_{I \in S_1}\frac{(n + k)^{k - 3}}{(k - 3)!}(C - 1)^{k_1(I)}+ R_1    
\end{equation}
where 
\begin{equation*}
\abs{R_1} \leq \sum_{I \in S_1} \frac{(n + k)^{k - 3}}{(k - 3)!}(C - 1)^{k_1(I)}\abs{S_{n,I}^{(2)}}\leq \sum_{I \in S_1}\frac{(n + k)^{k - 3}}{(k - 3)!}(C - 1)^{k_1(I)} \cdot \frac{C_2k^2}{(n + k)^{1 - \alpha}}.
\end{equation*}
The residue $R_1$ of \eqref{eq:main-term-2nd} is bounded by
\begin{equation}
    \label{eq:R1}
    \abs{R_1} \leq \sum_{I \in \{0,1\}^k}\frac{(n + k)^{k - 3}}{(k - 3)!}(C - 1)^{k_1(I)} \cdot \frac{C_2k^2}{(n + k)^{1 - \alpha}} = \frac{(n + k)^{k - 3}}{(k - 3)!}C^k \cdot \frac{C_2k^2}{(n + k)^{1 - \alpha}}.
\end{equation}
The main term of \eqref{eq:main-term-2nd} satisfies
\begin{align*}
&\sum_{I \in S_1}\frac{(n + k)^{k - 3}}{(k - 3)!}(C - 1)^{k_1(I)} \\
=& \frac{(n + k)^{k - 3}}{(k - 3)!}\sum_{I \in \{0,1\}^k}(C - 1)^{k_1(I)}
- \frac{(n + k)^{k - 3}}{(k - 3)!}\sum_{I \in S_2}(C - 1)^{k_1(I)} \\
=& \frac{(n + k)^{k - 3}}{(k - 3)!}C^{k} - \frac{(n + k)^{k - 3}}{(k - 3)!}\sum_{I \in S_2}(C - 1)^{k_1(I)},
\end{align*}
Let $R_2$ denote
$$R_2 := -\frac{(n + k)^{k - 3}}{(k - 3)!}\sum_{I \in S_2}(C - 1)^{k_1(I)}.$$
Note that
\begin{equation*}
\sum_{I \in S_2}(C - 1)^{k_1(I)} = \binom{k}{2}(C - 1)^{k - 2} + k(C - 1)^{k - 1} + (C - 1)^k \leq k^2C^2(C - 1)^{k - 2}
\end{equation*}
so
\begin{equation}
\label{eq:R2}
\abs{R_2} \leq \frac{(n + k)^{k - 3}}{(k - 3)!} k^2C^2(C - 1)^{k - 2}.
\end{equation}
Thus we conclude that
$$\sum_{I \in S_1} a^I_{n + 1} - 2\sum_{I \in S_1} a^I_{n} + \sum_{I \in S_1} a^I_{n - 1} = \frac{(n + k)^{k - 3}}{(k - 3)!}C^{k} + R_1 + R_2$$
where $R_1, R_2$ are controlled by \eqref{eq:R1} and \eqref{eq:R2} respectively.

It remains to estimate
$$\sum_{I \in S_2} a^I_{n}.$$
For each $I = (i_1,\cdots,i_n)\in S_2$, we have
$$a^I_n = \sum_{x_1 + \cdots + x_k = n} a^{(i_1)}_{x_1}\cdots a^{(i_n)}_{x_n}.$$
By \cref{eq:condition}, we have $a_n \leq C + Dn^{\alpha} \leq (C + D)n^{\alpha}$. Thus
$$\sum_{x_1 + \cdots + x_k = n} a^{(i_1)}_{x_1}\cdots a^{(i_k)}_{x_k} \leq (C + D)n^{\alpha}\sum_{x_1 + \cdots + x_k = n} a^{(i_2)}_{x_2}\cdots a^{(i_k)}_{x_k}.$$
We can appeal to \cref{lem:at-least-one-zero} to obtain
$$\sum_{x_1 + \cdots + x_k = n} a^{(i_2)}_{x_2}\cdots a^{(i_k)}_{x_k} \leq \binom{n + k - 1}{k - 1}(C - 1)^{k_1((i_2,\cdots,i_n))}\leq \binom{n + k - 1}{k - 1}(C - 1)^{k_1(I) - 1}C.$$
Thus we obtain
\begin{align*}
\sum_{I \in S_2} a^I_{n} &\leq (C + D)n^{\alpha} \sum_{I \in S_2} \binom{n + k - 1}{k - 1}(C - 1)^{k_1(I) - 1}C \\
&\leq (C + D)n^{\alpha} \binom{n + k - 1}{k - 1}\cdot k(k - 1)C^3(C - 1)^{k - 3}.    
\end{align*}
We conclude that
\begin{equation*}
\sum_{I \in S_2} a^I_{n} \leq 3(C + D)C^3(C - 1)^{k - 3} (n + k)^{2 + \alpha} \frac{(n + k)^{k - 3}}{(k - 3)!}
\end{equation*}
Thus the second order difference $R_3$ of $\sum_{I \in S_2} a^I_{n}$ is bounded by
\begin{equation}
\label{eq:R3}
R_3 \leq E_1 n^{2 + \alpha} \frac{(n + k)^{k - 3}}{(k - 3)!} (C - 1)^{k}
\end{equation}
for some constant $E_1$.

We have thus finished the second order difference estimate
$$a_{n + 1, k} - 2a_{n,k} + a_{n - 1, k} = \frac{(n + k)^{k - 3}}{(k - 3)!}C^k + R_1 + R_2 + R_3$$
where the errors $R_i$ satisfy \eqref{eq:R1}, \eqref{eq:R2} and \eqref{eq:R3} respectively. We now note that the absolute value of each $R_i$ is at most a constant times
$$\frac{(n + k)^{k - 3}}{(k - 3)!}C^k \cdot \left(\frac{k^2}{(n + k)^{1 - \alpha}} + (n + k)^{2 + \alpha} \cdot A^{-(2 + \alpha)k}\right).$$
Thus we obtain the desired estimate.
\end{proof}
Using the identity
$$a_{n,k} - a_{n - 1, k} = \sum_{n' = -1}^{n - 1} (a_{n' + 1,k} - 2a_{n', k} + a_{n' - 1, k})$$
We conclude an analogous estimate on the first-order difference.
\begin{corollary}
\label{cor:1st-order}
For any $n\geq 0$ and $k \geq 3$, we have
$$a_{n, k} - a_{n - 1, k} = C^{k}\frac{(n + k)^{k - 2}}{(k - 2)!}(1 + R^{(1)}_{n,k}).$$
where $R^{(1)}_{n, k}$ satisfies
$$\abs{R^{(1)}_{n,k}} \leq E\left(\frac{k^2}{(n + k)^{1 - \alpha}} + (n + k)^{2 + \alpha}A^{-(2 + \alpha)k}\right)$$
for some constant $E$.
\end{corollary}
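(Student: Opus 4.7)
The plan is to reduce \cref{cor:1st-order} to the second-order estimate of \cref{lem:2nd-order} via the telescoping identity
$$a_{n,k} - a_{n-1,k} = \sum_{n'=-1}^{n-1}\bigl(a_{n'+1,k} - 2a_{n',k} + a_{n'-1,k}\bigr),$$
valid under the convention $a_{-1,k} = a_{-2,k} = 0$. Substituting \cref{lem:2nd-order} term by term converts the right-hand side into
$$\frac{C^k}{(k-3)!} \sum_{n'=-1}^{n-1}(n'+k)^{k-3}\bigl(1 + R^{(2)}_{n',k}\bigr),$$
so the proof splits into two pieces: (i) showing the main sum $\sum_{n'=-1}^{n-1}(n'+k)^{k-3}/(k-3)!$ equals $(n+k)^{k-2}/(k-2)!$ up to a multiplicative error absorbable into $k^2/(n+k)^{1-\alpha}$, and (ii) bounding the weighted error sum $\sum_{n'=-1}^{n-1}(n'+k)^{k-3}R^{(2)}_{n',k}/(k-3)!$ by the two terms claimed in the corollary.

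For part (i), I would write $j^{k-3} = (k-3)!\binom{j}{k-3}\prod_{i=0}^{k-4}(1 + i/(j-i))$; in the regime $j \gg k^2$ the product is $1 + O(k^2/j)$, and the initial range $j = O(k^2)$ contributes negligibly to the full sum. The hockey-stick identity $\sum_{j} \binom{j}{k-3} = \binom{j_{\max}+1}{k-2}$ then gives a closed form, and a second conversion $\binom{n+k}{k-2} = \frac{(n+k)^{k-2}}{(k-2)!}\bigl(1 + O(k^2/(n+k))\bigr)$ yields the desired leading term with a relative error already of the claimed shape.

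For part (ii), plugging in the two-term bound on $R^{(2)}_{n',k}$ separates the error sum into $\sum k^2(n'+k)^{k-4+\alpha}/(k-3)!$ and $A^{-(2+\alpha)k}\sum (n'+k)^{k-1+\alpha}/(k-3)!$; each sum is dominated by its largest summand (at $n' = n-1$) up to a factor comparable to $n+k$ (or $k$), and after dividing by the main term $(n+k)^{k-2}/(k-2)!$ these reduce to the claimed contributions $k^2/(n+k)^{1-\alpha}$ and $(n+k)^{2+\alpha}A^{-(2+\alpha)k}$. The main obstacle is the finicky bookkeeping of constants: any surplus factors of $k$ arising from summation must be absorbed into powers of $(n+k)$ using $k \leq n+k$ and $\alpha \geq 0$, just as was done in the proof of \cref{lem:2nd-order} itself. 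Once this is verified, the claim with constant $E$ (possibly larger than the one in \cref{lem:2nd-order}) follows immediately.
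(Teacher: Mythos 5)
Your proposal is correct and takes essentially the same route as the paper: the paper states exactly the telescoping identity $a_{n,k}-a_{n-1,k} = \sum_{n'=-1}^{n-1}(a_{n'+1,k}-2a_{n',k}+a_{n'-1,k})$ and immediately asserts that the estimate carries over, leaving the summation bookkeeping implicit. Your parts (i) and (ii) simply spell out that bookkeeping — converting $\sum_{j}j^{k-3}/(k-3)!$ into $(n+k)^{k-2}/(k-2)!$ with $O(k/(n+k))$ relative error and checking that the weighted error sum reproduces the two claimed error terms — which is exactly what the paper leaves to the reader.
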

Again using the identity
$$a_{n-1,k} = \sum_{n' = 0}^{n-1} (a_{n',k} - a_{n' - 1, k})$$
We conclude an analogous estimate on the zeroth-order difference.
\begin{corollary}
\label{cor:0nd-order}
For any $n\geq 1$ and $k \geq 3$, we have
$$a_{n-1, k}= C^{k}\frac{(n + k)^{k - 1}}{(k - 1)!}(1 + R^{(0)}_{n,k}).$$
where $R^{(0)}_{n, k}$ satisfies
$$\abs{R^{(0)}_{n,k}} \leq E\left(\frac{k^2}{(n + k)^{1 - \alpha}} + (n + k)^{2 + \alpha}A^{-(2 + \alpha)k}\right)$$
for some constant $E$.
\end{corollary}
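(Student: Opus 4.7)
The plan is to repeat the telescoping step that just produced Corollary~\ref{cor:1st-order} from Lemma~\ref{lem:2nd-order}, one level higher. Setting $a_{-1,k}=0$, the displayed telescoping identity combined with Corollary~\ref{cor:1st-order} rewrites
$$a_{n-1,k} = C^k\sum_{n'=0}^{n-1}\frac{(n'+k)^{k-2}}{(k-2)!}\bigl(1+R^{(1)}_{n',k}\bigr),$$
so there are only two sums to estimate: a main-term sum and an error-weighted sum.

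For the main-term sum, I would compare $\sum_{n'=0}^{n-1}(n'+k)^{k-2}$ with $\int_0^n(x+k)^{k-2}\,dx = ((n+k)^{k-1}-k^{k-1})/(k-1)$, using monotonicity of $(x+k)^{k-2}$ to control the sum-minus-integral defect by a single endpoint term of size $O((n+k)^{k-2})$. The subtracted $k^{k-1}$ produces a relative perturbation $(k/(n+k))^{k-1}$, which is either exponentially small in $k$ or (in the edge case of bounded $n$) a bounded constant safely absorbed into the constant $E$ of the target bound; the $O((n+k)^{k-2})$ discrepancy yields a relative error $O(k/(n+k))$. Both fit easily inside the claimed bound $O(k^2/(n+k)^{1-\alpha})$, giving
$$\sum_{n'=0}^{n-1}\frac{(n'+k)^{k-2}}{(k-2)!} = \frac{(n+k)^{k-1}}{(k-1)!}\bigl(1 + O(k^2/(n+k)^{1-\alpha})\bigr).$$

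For the error-weighted sum, I would substitute the two-piece pointwise bound on $|R^{(1)}_{n',k}|$ and apply the same integral-comparison to each resulting subsum. The polynomially-decaying piece $k^2/(n'+k)^{1-\alpha}$, weighted by $(n'+k)^{k-2}/(k-2)!$, integrates to a multiple of $(n+k)^{k-1}/(k-1)! \cdot k^2/(n+k)^{1-\alpha}$; the exponentially-small piece $(n'+k)^{2+\alpha}A^{-(2+\alpha)k}$ is monotone increasing in $n'$, so its weighted sum is controlled by the endpoint value and contributes a multiple of $(n+k)^{k-1}/(k-1)! \cdot (n+k)^{2+\alpha}A^{-(2+\alpha)k}$. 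Combining with the main-term asymptotic and enlarging $E$ absorbs all accumulated constants. I do not anticipate a genuine obstacle: this step is essentially a rerun of the bookkeeping that produced Corollary~\ref{cor:1st-order}, and the only care needed is the edge-case verification of the $(k/(n+k))^{k-1}$ boundary term described above, which is handled by the observation that the target error is itself $\Omega(1)$ precisely when this boundary term fails to be exponentially small.
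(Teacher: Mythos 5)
Correct, and this is essentially the paper's own approach. The paper obtains Corollary~\ref{cor:0nd-order} from Corollary~\ref{cor:1st-order} by exactly the telescoping identity $a_{n-1,k}=\sum_{n'=0}^{n-1}(a_{n',k}-a_{n'-1,k})$ with $a_{-1,k}=0$, leaving the integral-comparison bookkeeping implicit; your fill-in of the details (bounding the Riemann-sum defect, noting the boundary term $(k/(n+k))^{k-1}$ is absorbed because the target error is already $\Omega(1)$ when $n=O(k)$, and pushing the two pieces of the $R^{(1)}$ bound through the weighted sum) is exactly what the paper's one-line argument presupposes.
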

Finally, we conclude by \cref{lem:2nd-order}, \cref{cor:1st-order} and \cref{cor:0nd-order} that for any $n \geq -1$, we have
$$\frac{a_{n - 1, k} (a_{n + 1, k} - 2a_{n, k} + a_{n - 1, k})}{(a_{n,k} - a_{n - 1, k})^2} = \frac{k - 2}{k - 1} \cdot  \frac{(1 + R_{n,k}^{(0)})(1 + R_{n,k}^{(2)})}{(1 + R_{n,k}^{(1)})^2}$$
where for each $i \in \{0,1,2\}$ we have
$$\abs{R_{n,k}^{(i)}} \leq E_i\left(\frac{k^2}{(n + k)^{1 - \alpha}} + (n + k)^{2 + \alpha}A^{-(2 + \alpha)k}\right)$$
for constants $E_0, E_1, E_2$. If
$$k^{5/(1 - \alpha)} \leq n \leq \frac{A^k}{k^2}$$
then for sufficiently large $k$, we have
$$\abs{R_{n,k}^{(i)}} \leq \frac{1}{k^2}$$
for each $i \in \{0,1,2\}$. Therefore we get
$$\frac{a_{n - 1, k} (a_{n + 1, k} - 2a_{n, k} + a_{n - 1, k})}{(a_{n,k} - a_{n - 1, k})^2} \leq \frac{k - 2}{k - 1} \cdot  \left(1 + \frac{1}{k^2}\right)^4 < 1.$$
So $\{a_{n, k}\}$ is log-concave for $k^{5/(1 - \alpha)} \leq n \leq \frac{\eta^k}{k^2}$. As we have shown that $\{a_{n, k}\}$ is log-concave for $n \leq B^{k^{1/3}}$, where $B > 1$ is a constant, the two intervals glue together to obtain \cref{thm:main2}.

Finally, we prove \cref{cor:conj-main}. Let $f(z)$ be defined in \cref{conj:main} and let 
$$g(z) := \frac{f(z)}{z} = \sum_{n = 0}^{\infty} \sigma_{-1}(n + 1)z^n.$$  As $\sigma_{-1}(n) \geq 1$ for any $n \geq 1$, $g$ is $1$-lower bounded. Furthermore, we have
$$\sigma_{-1}(1) + \cdots + \sigma_{-1}(n + 1) = \sum_{m = 1}^{n + 1}\sum_{d | m} \frac{1}{d} = \sum_{d = 1}^{n + 1} \frac{1}{d}\floor{\frac{n + 1}{d}}.$$
Thus we have
$$\sigma_{-1}(1) + \cdots + \sigma_{-1}(n + 1) \leq \sum_{d = 1}^{\infty}\frac{n + 1}{d^2} = \frac{\pi^2}{6}(n + 1)$$
and
$$\sigma_{-1}(1) + \cdots + \sigma_{-1}(n + 1) \geq \sum_{d = 1}^{n + 1} \left(\frac{n + 1}{d^2} - \frac{d - 1}{d^2}\right) \geq \frac{\pi^2}{6}(n + 1) - \log(n + 1) - 1.$$
So $g(z)$ satisfies the condition of \cref{thm:main2} for $C = \pi^2 / 6$ and any $\alpha > 0$. \cref{cor:conj-main} then follows from \cref{thm:main2}.

\end{document}